\documentclass[a4paper,10pt]{article}
\usepackage{a4}
\usepackage{amsmath,amsthm,amssymb,enumerate}
\usepackage{amsthm}

\usepackage{amsfonts, amsmath}   
\usepackage{amssymb}
\usepackage{amsmath}
\usepackage{graphics}


\setcounter{MaxMatrixCols}{16}
\usepackage{amsmath,amsthm,amssymb,enumerate}
\usepackage{amsthm}
\newtheorem{theorem}{Theorem}[section]

\newtheorem{prop}[theorem]{Proposition}

\newtheorem{obs}[theorem]{Observation}
\newtheorem{cor}[theorem]{Corollary}
\newtheorem{dfn}[theorem]{Definition}

\newtheorem{con}[theorem]{Conjecture}

\numberwithin{equation}{section}

\begin{document}
\title{\textbf{ Majority out-dominating sets in digraphs} }
\author{ {\bf \sc     Karam Ebadi$^\dag$,  Mart\'{\i}n Manrique$^*$,}\\ {\bf \sc Reza Jafary$^\dag$,  and J. Joseline Manora$^\ddag$ }\\\\ {\footnotesize  $^\dag$ Department of Mathematics, Islamic Azad University of Miandoab, Iran.}\\ {\footnotesize  {\it  karam\_ebadi@yahoo.com, jafary\_reza228@yahoo.com}} \\\\ {\footnotesize $^*$ $n$-CARDMATH, Kalasalingam University, India.} \\ {\footnotesize  {\it   martin.manrique@gmail.com}}\\\\ {\footnotesize $^\ddag$Department of Mathematics, T. B. M. L. College, India.}\\ {\footnotesize {\it    joseline\_manora@yahoo.co.in}}\\\\
}

\date{}
\maketitle

\begin{abstract}
The concept of majority domination in graphs has been defined in at least
two different ways: As a function  and as a set. In this work we
extend the latter concept to digraphs, while the former was extended in another paper.  Given a digraph $D=(V,A),$ a set $S\subseteq V$ is a \textit{ majority out-dominating set } (MODS)  of $D$ if $|N^+[S]|\geq \frac {n}{2}.$   The minimum cardinality of a  MODS in $D$ is
the {\it set majority out-domination number} $\gamma^+_{m}(D)$  of $D.$ In this work we introduce these concepts and prove some results about them, among which the characterization of minimal MODSs.
\end{abstract}

{\bf MSC 2010:} 05C20, 05C69.

 \textbf{Key words}:  Majority dominating set, majority out-dominating set, orientation of a graph.

\section{Introduction}

Although domination and other related concepts have been extensively studied
for undirected graphs, the respective analogues on digraphs have not received
much attention. A survey of results on domination in directed graphs  is found in chapter 15 of \cite{h}, but it mostly focuses
on kernels and solutions (that
is, independent in- and out-dominating sets), and on domination
in tournaments.

The notion of majority out-dominating set is fairly interesting from the mathematical point of view, since it is close enough to that of out-dominating set as to inherit several of its properties and allow the adaptation of some known results, and at the same time it is different enough as to open a new line of research.

This concept has interesting applications, specially related to democracy: The main idea of democracy is that of a representative group which is accepted by a majority of the population. In some way, this corresponds to majority dominating sets in undirected graphs. However, it is important to notice that the relation is actually directed: The representative group must be accepted by at least half of the population, but if the group itself accepts or not a particular sector of such population has no influence at all in the scope of simple democracy. Of course, more complex systems exist, with the aim that every important minority has some acceptance from the representative group, and those systems are better fit for large populations, like that of a country. Nevertheless, simple democracy is still the best option for small groups, like the members of a club or those of a small company. In the context of simple democracy, the concept of  majority out-dominating set in digraphs works more accurately than that of majority dominating set in undirected graphs.

\section{Fundamentals}

Throughout this paper $D=(V,A)$ is a finite directed graph with neither
loops nor multiple arcs (but pairs of opposite arcs are allowed) and $%
G=(V,E) $ is a finite undirected graph with neither loops nor multiple
edges. Unless stated otherwise, $n$ denotes the {\it order} of $D$ (or $G$), that is, $n=|V|$. For basic terminology on graphs and digraphs we refer to \cite{cl}.

Let $G=(V,E)$ be a graph. For any vertex $u\in V$, the set $%
N_G(u)=\{v: uv\in E\}$ is called the
{\it neighborhood}  of $u$ in $G.$  $N_G[u]=N_G(u)\cup \{u\}$ is the {\it closed neighborhood} of $u$ in $G.$ The {\it degree} of $u$ in $G$ is $%
d_G(u)=|N_G(u)|.$ When the graph $G$ is clear from the context, we may write simply $N(u),$  $N[u],$ and $d(u).$

Let $D=(V,A)$ be a digraph. For any vertex $u\in V$, the sets $%
N^{+}_D(u)=\{v: uv\in A\}$ and $N^{-}_D(u)=\{v: vu\in A\}$ are called the
{\it out-neighborhood} and {\it in-neighborhood} of $u$ in $D,$ respectively. $N^{+}_D[u]=N^{+}_D(u)\cup \{u\}$ is the {\it closed out-neighborhood} of $u$ in $D,$ and $N^{-}_D[u]=N^{-}_D(u)\cup \{u\}$ is the {\it closed in-neighborhood} of $u$ in $D.$ When the digraph $D$ is clear from the context, we may write simply $N^+(u),$  $N^-(u),$ $N^+[u],$ and $N^-[u].$ The {\it out-degree} and  {\it in-degree} of $u$ in $D$ are defined by $%
d^{-}_D(u)=|N^{-}_D(u)|$ and $d^{+}_D(u)=|N^{+}_D(u)|,$ respectively. The  {\it maximum out-degree} of $D$ is
denoted by  $\Delta ^{+}_D.$
When the digraph $D$ is clear from the context, we may write $d^{-}(u),$ $d^{+}(u),$ and $\Delta ^{+}.$

Given a set $X\subseteq V$ and $u\in X,$ the set of {\it external private out-neighbors} of $u$ respect to $X$ is $pn^+(u,X)=\{v\in V\setminus X:N^-(v)\cap X=\{u\}\},$ and $pn^+[u,X]=pn^+(u,X)\cup \{u\}$. Moreover, $D[X]$ denotes the subdigraph of $D$ {\it induced} by $X.$\\

Let $G=(V,E)$ be a graph. A subset $S$ of $V$ is called a {\it dominating set} of $%
G$ if every vertex in $V\setminus S$ is adjacent to at least one vertex in $%
S.$ The minimum cardinality of a dominating set of $G$ is called the
{\it domination number} of $G$ and is denoted by $\gamma (G),$ or simply $\gamma .$

Let $D=(V,A)$ be a digraph. A subset $S$ of $V$ is called an {\it out-dominating
set} of $D$ if for every vertex $v\in V\setminus S$ there exists at least one
vertex $u\in S\cap N^{-}(v).$ The minimum cardinality of an out-dominating
set of $D$ is  the {\it out-domination number} of $D$ and is denoted by $%
\gamma ^{+}(D)$, or simply $\gamma ^{+}.$ {\it In-dominating sets} in digraphs are
defined in a similar way, and the minimum cardinality of an in-dominating
set of $D$ is called the {\it in-domination number} of $D,$ denoted by $%
\gamma ^{-}(D)$.\\

Let $G=(V,E)$ be a graph. A {\it majority dominating function} \cite{mf} is a function
$f:V\rightarrow \{-1,1\}$ such that the set $S=\{v\in V:\sum\limits_{u\in
N[v]}f(u)\geq 1\}~$satisfies $|S|~\geq \frac{n}{2};$ the {\it weight} of a
majority dominating function is $w(f)=\sum\limits_{v\in V}f(v),$ and $\min
\{w(f):f$ is a majority dominating function in $G\}$ is the {\it majority
domination number} of $G,$ denoted $\gamma _{maj}(G).$  A {\it majority dominating set} \cite{ms} is a set $M\subseteq V$ such that
$|N[M]|~\geq \frac{n}{2},$ and $\min \{|M|~:M$ is a majority dominating
set of $G\}$ is the {\it set majority domination number} of $G,$ denoted $\gamma
_{m}(G)$; a majority dominating set $M$ of $G$ such that $|M|~=\gamma _{m}(G)
$ is a $\gamma _{m}(G)$-{\it set}. It is straightforward that for
every graph $G$ and every majority dominating function $f$ of $G,$ $\gamma _{m}(G)\leq
|f^{-1}(1)|,$ since $f^{-1}(1)$ is a majority dominating set of $G.$

Both concepts can be naturally extended to digraphs: Given a digraph $D,$ a
{\it majority out-dominating function} of $D$ is a function $f:V\rightarrow
\{-1,1\}$ such that the set $S=\{v\in V:\sum\limits_{u\in N^{+}[v]}f(u)\geq
1\}~$satisfies $|S|~\geq \frac{n}{2};$ the {\it weight} of $f$ is $%
w(f)=\sum\limits_{v\in V}f(v),$ and $\min \{w(f):f$ is a majority
out-dominating function in $D\}$ is the {\it majority out-domination number} of $D,
$ denoted $\gamma _{maj}^{+}(D).$  Similarly, a {\it majority out-dominating set} (MODS) of $D$ is a set $%
M\subseteq V$ such that $|N^{+}[M]|~\geq \frac{n}{2},$ and $\min \{|M|~:M$
is a MODS of $D\}$ is the {\it set majority out-domination
number} of $D,$ denoted $\gamma _{m}^{+}(D)$; a MODS $M
$ of $D$ such that $|M|~=\gamma _{m}^{+}(D)$ is a $\gamma _{m}^{+}(D)$-{\it set}.

However, it does not hold that  for every digraph $D$ and every majority out-dominating function $f$ of $D,$ $\gamma _{m}^{+}(D)\leq |f^{-1}(1)|,$ since $f^{-1}(1)$ is a majority in-dominating set of $D$ (defined analogously), but not necessarily a
MODS of $D.$ For example, consider the digraph $D=(V,A)$ shown in Figure 1, where $V=\{u,v\}\cup S\cup T,$ $|S|=k\geq3,$ $|T|=k+2,$ $d^-(x)=0$ for every $x\in S\cup T,$ $d^+(u)=d^+(v)=0,$ $N^-(u)=S,$ and $N^-(v)=S\cup T.$ Then the function $g:V\rightarrow \{-1,1\}$ such that $g(u)=g(v)=1$ and $g(x)=-1$ for every $x\in S\cup T$ is a majority out-dominating function of $D$, but $\gamma^+_m(D)=k$.

\begin{center}
\unitlength 1mm 
\linethickness{0.4pt}
\ifx\plotpoint\undefined\newsavebox{\plotpoint}\fi 
\begin{picture}(71.5,48.25)(0,0)
\put(6.25,24.25){\circle*{2}}
\put(46,23.75){\circle*{2}}
\put(66.25,4){\circle*{2}}
\put(25.75,43.25){\circle*{2}}
\put(66.25,13.75){\circle*{2}}
\put(66.25,35.75){\circle*{2}}
\put(26.25,34){\circle*{2}}
\put(66.25,43){\circle*{2}}
\put(26,28.5){\circle*{1}}
\put(26,24.75){\circle*{1}}
\put(26,21.25){\circle*{1}}
\put(26,18.75){\circle*{1}}
\put(66,31.25){\circle*{1}}
\put(66,27){\circle*{1.12}}
\put(66,22.75){\circle*{1}}
\put(66.25,18.5){\circle*{1}}
\put(25.75,13.75){\circle*{2}}
\put(15.88,33.63){\vector(-1,-1){.07}}\multiput(25.5,43.25)(-.033712785,-.033712785){571}{\line(0,-1){.033712785}}
\put(16.13,29){\vector(-2,-1){.07}}\multiput(26,34)(-.066498316,-.033670034){297}{\line(-1,0){.066498316}}
\put(15.63,19){\vector(-2,1){.07}}\multiput(25.25,13.75)(-.061698718,.033653846){312}{\line(-1,0){.061698718}}
\put(35.88,33.38){\vector(1,-1){.07}}\multiput(25.75,43.25)(.034556314,-.033703072){586}{\line(1,0){.034556314}}
\put(36,28.75){\vector(2,-1){.07}}\multiput(26.25,34)(.0625,-.033653846){312}{\line(1,0){.0625}}
\put(35.63,18.63){\vector(2,1){.07}}\multiput(25.75,13.5)(.064967105,.033717105){304}{\line(1,0){.064967105}}
\put(56,33.38){\vector(-1,-1){.07}}\multiput(66,42.75)(-.035971223,-.033723022){556}{\line(-1,0){.035971223}}
\put(56.25,29.63){\vector(-3,-2){.07}}\multiput(66.25,35.5)(-.05730659,-.033667622){349}{\line(-1,0){.05730659}}
\put(55.88,19.13){\vector(-2,1){.07}}\multiput(65.75,14)(-.064967105,.033717105){304}{\line(-1,0){.064967105}}
\put(55.75,14.13){\vector(-1,1){.07}}\multiput(66,4.25)(-.034982935,.033703072){586}{\line(-1,0){.034982935}}
\put(5.5,20.25){$u$}
\put(25.5,45.25){$s_1$}
\put(25.5,35.75){$s_2$}
\put(25.5,10.5){$s_k$}
\put(45,20){$v$}
\put(68.25,43.25){$t_1$}
\put(68.5,35.5){$t_2$}
\put(68.75,14){$t_{k+1}$}
\put(68.5,3.5){$t_{k+2}$}
\end{picture}

Figure 1
\end{center}

In this article we focus on majority dominating sets, while we study majority out-dominating functions in another paper \cite{mafu}.

\section{ Majority out-dominating sets }

\begin{obs} \label{lema 0}
If $H$ is a spanning subdigraph of a digraph $D,$ then
 $\gamma^+_m(D)\leq \gamma^+_m(H).$
\end{obs}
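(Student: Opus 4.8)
The plan is to exploit the fact that enlarging the arc set can only enlarge closed out-neighborhoods, so that any majority out-dominating set of the sparser digraph $H$ automatically remains one in the denser digraph $D.$ Since $H$ is a spanning subdigraph, $V(H)=V(D)=V$ and $A(H)\subseteq A(D)$; in particular the order $n=|V|$ is the same for both, so the threshold $\frac{n}{2}$ appearing in the definition of a MODS is identical in $H$ and in $D.$ This common threshold is precisely what makes the comparison between the two parameters meaningful.

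The first step is to establish the pointwise monotonicity of out-neighborhoods. For every vertex $u\in V,$ the inclusion $A(H)\subseteq A(D)$ gives $N^+_H(u)\subseteq N^+_D(u),$ hence $N^+_H[u]\subseteq N^+_D[u].$ Taking unions over an arbitrary set $S\subseteq V$ then yields $N^+_H[S]\subseteq N^+_D[S],$ and therefore $|N^+_H[S]|\leq |N^+_D[S]|$ for every such $S.$

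The second step closes the argument by transporting a minimum witness from $H$ to $D.$ Let $M$ be a $\gamma^+_m(H)$-set, so that $|M|=\gamma^+_m(H)$ and $|N^+_H[M]|\geq\frac{n}{2}.$ Combining this with the inequality from the first step, we obtain $|N^+_D[M]|\geq |N^+_H[M]|\geq\frac{n}{2},$ so $M$ is also a MODS of $D.$ By the minimality of $\gamma^+_m(D)$ taken over all MODSs of $D,$ we conclude $\gamma^+_m(D)\leq |M|=\gamma^+_m(H),$ as desired.

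There is no genuine obstacle in this argument; the only point deserving care is the bookkeeping around the word \emph{spanning}, which is what guarantees that the two digraphs share the same vertex set and hence the same value of $n.$ If the vertex sets were allowed to differ, the two thresholds $\frac{n}{2}$ would no longer coincide and the inequality could fail, so it is worth recording this hypothesis explicitly rather than leaving it implicit in the notation.
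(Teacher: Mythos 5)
Your proof is correct and follows essentially the same route as the paper, which simply notes that any MODS of $H$ is also a MODS of $D$; you have merely spelled out the underlying inclusion $N^+_H[S]\subseteq N^+_D[S]$ and the role of the spanning hypothesis in keeping the threshold $\frac{n}{2}$ the same. Nothing further is needed.
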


\begin{proof}
The result follows immediately because  any MODS of $H$ is also a MODS of $D.$
\end{proof}

\begin{obs}\label{obs 1}
 For the directed path $P_{n}$, $\gamma
^{+}(P_{n})=\lceil \frac{n}{2}\rceil$, and for the directed cycle $C_{n}$ ($%
n\geq 3$), $\gamma ^{+}(C_{n})=\lceil \frac{n}{2}\rceil .$
\end{obs}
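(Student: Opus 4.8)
The plan is to prove matching lower and upper bounds for $\gamma^+$, handling $P_n$ and $C_n$ together as far as possible, since both are digraphs in which every vertex has out-degree at most $1$.

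For the lower bound I would exploit this out-degree condition directly. In either $P_n$ or $C_n$ one has $|N^+[v]| \leq 2$ for every vertex $v$, namely the vertex itself together with its unique out-neighbor when it exists. Consequently, for any out-dominating set $S$ we get $|N^+[S]| = |\bigcup_{v \in S} N^+[v]| \leq \sum_{v \in S} |N^+[v]| \leq 2|S|$. Since $S$ is out-dominating, $N^+[S] = V$, so $n = |N^+[S]| \leq 2|S|$, which forces $|S| \geq \tfrac{n}{2}$ and hence $|S| \geq \lceil \tfrac{n}{2} \rceil$ because $|S|$ is an integer. This gives $\gamma^+(P_n) \geq \lceil \tfrac{n}{2}\rceil$ and $\gamma^+(C_n) \geq \lceil \tfrac{n}{2}\rceil$ at once.

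For the upper bound I would exhibit an explicit out-dominating set of size $\lceil \tfrac{n}{2} \rceil$. Labelling the vertices $v_1, \ldots, v_n$ so that the arcs are $v_i \to v_{i+1}$ (read cyclically in the case of $C_n$), I take $S$ to be the set of odd-indexed vertices $\{v_1, v_3, v_5, \ldots\}$. Every even-indexed vertex $v_{2j}$ then has its unique in-neighbor $v_{2j-1}$ in $S$, so it is out-dominated; counting indices gives $|S| = \lceil \tfrac{n}{2} \rceil$. For $P_n$ this settles the matter once one notes that the source $v_1$, whose in-neighborhood is empty and which therefore must lie in every out-dominating set, is indeed included. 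For $C_n$ the same alternating set works, and I would only verify that the wrap-around arc $v_n \to v_1$ creates no gap for either parity of $n$.

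The argument is essentially a counting estimate, so I do not anticipate a serious obstacle. The only points requiring care are the parity bookkeeping, that is, confirming that the odd-indexed set has exactly $\lceil \tfrac{n}{2} \rceil$ elements and out-dominates every remaining vertex in both the cases $n$ even and $n$ odd, and, for $P_n$, the observation that the source vertex forces itself into the set in a way that is consistent with the proposed construction.
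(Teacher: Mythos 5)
Your proof is correct: the counting argument via $|N^+[v]|\leq 2$ gives the lower bound $\lceil \frac{n}{2}\rceil$, and the alternating odd-indexed set gives a matching upper bound for both $P_n$ and $C_n$. The paper states this as an observation with no proof at all, so your argument simply supplies, in the standard way, the details the authors treat as evident.
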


\begin{obs}\label{obs 2}
 For the directed path $P_{n}$, $\gamma^+_m(P_{n})=\lceil \frac{n}{4}\rceil$, and for the directed cycle $C_{n}$ ($%
n\geq 3$), $\gamma^+_m(C_{n})=\lceil \frac{n}{4}\rceil .$
\end{obs}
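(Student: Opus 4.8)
The plan is to prove matching lower and upper bounds, both equal to $\lceil n/4\rceil$, and I would treat the two families in parallel because the lower bound argument is identical for them. Label the vertices $v_1,\dots,v_n$ with arcs oriented $v_i\to v_{i+1}$ (indices modulo $n$ for $C_n$); in both cases every vertex $u$ has $d^+(u)\le 1$, hence $|N^+[u]|\le 2$. For an arbitrary $S\subseteq V$ this gives
\[
|N^+[S]|=\Big|\bigcup_{u\in S}N^+[u]\Big|\le\sum_{u\in S}|N^+[u]|\le 2|S|.
\]
If $S$ is a MODS then $|N^+[S]|\ge n/2$, so $2|S|\ge n/2$, i.e.\ $|S|\ge n/4$; since $|S|$ is an integer this forces $|S|\ge\lceil n/4\rceil$, proving $\gamma^+_m\ge\lceil n/4\rceil$ for both $P_n$ and $C_n$.

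For the upper bound I would exhibit an explicit MODS of size $k:=\lceil n/4\rceil$ whose closed out-neighbourhoods are pairwise disjoint and each of size $2$, so that the chosen vertices out-dominate exactly $2k$ vertices. Concretely, take $S=\{v_1,v_3,\dots,v_{2k-1}\}$, so that $N^+[v_{2i-1}]=\{v_{2i-1},v_{2i}\}$; these sets are disjoint, whence $|N^+[S]|=2k=2\lceil n/4\rceil\ge n/2$ and $S$ is a MODS. Combined with the lower bound, this yields $\gamma^+_m(P_n)=\gamma^+_m(C_n)=\lceil n/4\rceil$.

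The one point requiring care is that the construction be legitimate: that the $k$ listed closed out-neighbourhoods are genuinely pairwise disjoint and each of size $2$. For the cycle this is a matter of the indices $v_1,\dots,v_{2k}$ being distinct; for the path one must additionally ensure that no chosen vertex is the terminal $v_n$ (the unique vertex with $|N^+[v_n]|=1$), equivalently that $v_{2k}$ exists. Both requirements reduce to the single inequality $2\lceil n/4\rceil\le n$, which I would verify by a short case analysis on $n\bmod 4$; it holds for every $n\ge 2$, the lone exception $n=1$ being relevant only to $P_1$ and settled directly by $S=\{v_1\}$. I expect this boundary bookkeeping to be the main obstacle, the double-counting lower bound and the disjoint-neighbourhood construction being otherwise routine.
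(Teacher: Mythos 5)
Your proof is correct, and it fills a gap rather than duplicating anything: the paper states this as an Observation with no proof at all. Your two-step argument --- the counting lower bound $|N^+[S]|\le 2|S|$ from the fact that every vertex of $P_n$ or $C_n$ has out-degree at most $1$, plus the explicit MODS $\{v_1,v_3,\dots,v_{2k-1}\}$ with pairwise disjoint closed out-neighbourhoods --- is exactly the standard reasoning the authors implicitly rely on, and your boundary checks ($2\lceil n/4\rceil\le n$ for $n\ge 2$, with $P_1$ handled separately) are the only delicate points and are handled correctly.
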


\begin{obs}\label{obs 0}
 For any digraph $D$  which has a hamiltonian circuit,  $\gamma^+_m(D)\leq\lceil \frac{n}{4}\rceil.$
\end{obs}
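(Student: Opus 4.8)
The plan is to derive the bound immediately from the two preceding observations, since all the real work has already been done. First I would recall that a Hamiltonian circuit of $D$ is, by definition, a directed cycle passing through every vertex of $D$ exactly once. Its arc set therefore constitutes a spanning subdigraph $H$ of $D$ with $V(H)=V(D)$ and $H\cong C_n$, the directed cycle on $n$ vertices. This identification is the only step that needs a moment of care: one must confirm that the circuit covers all $n$ vertices (it does, being Hamiltonian) and that, taken on its own arc set, it is exactly the directed cycle $C_n$ rather than some proper subdigraph of it.

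Next I would apply Observation \ref{obs 2}, which states $\gamma^+_m(C_n)=\lceil \tfrac{n}{4}\rceil$ for $n\geq 3$. Since $H\cong C_n$, this gives $\gamma^+_m(H)=\lceil \tfrac{n}{4}\rceil$. Finally, because $H$ is a spanning subdigraph of $D$, Observation \ref{lema 0} yields $\gamma^+_m(D)\leq \gamma^+_m(H)$. Chaining the two facts produces
\[
\gamma^+_m(D)\;\leq\;\gamma^+_m(H)\;=\;\gamma^+_m(C_n)\;=\;\left\lceil \tfrac{n}{4}\right\rceil,
\]
which is precisely the asserted inequality.

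I do not expect a genuine obstacle here: the statement is a corollary of the monotonicity of $\gamma^+_m$ under passing to spanning subdigraphs (Observation \ref{lema 0}) together with the exact value for directed cycles (Observation \ref{obs 2}), and the Hamiltonian hypothesis serves only to guarantee that a copy of $C_n$ sits inside $D$ as a spanning subdigraph. The single point worth flagging is the small-order regime: Observation \ref{obs 2} is stated for $n\geq 3$, so strictly speaking the argument as written covers $n\geq 3$, and the degenerate cases $n\leq 2$ (where a Hamiltonian circuit forces a $2$-cycle and the bound $\lceil n/4\rceil$ can be checked directly) would be handled separately if one wishes to be exhaustive.
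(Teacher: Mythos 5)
Your proposal is correct and follows essentially the same route as the paper: the paper's one-line proof invokes Observation \ref{obs 2} (with Observation \ref{lema 0} implicitly supplying the monotonicity over the spanning directed cycle given by the Hamiltonian circuit), which is exactly the chain you make explicit. Your extra remark about the degenerate cases $n\leq 2$ is a reasonable point of care but does not change the argument.
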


\begin{proof}
The result follows from Observation \ref{obs 2}.
\end{proof}

\begin{prop}  Let $l(D)$ denote the length of a longest directed path in $D$.  Then
$\gamma^+_m(D)\leq \lceil \frac {2n-l(D)-1}4\rceil,$ and the bound is sharp.
\end{prop}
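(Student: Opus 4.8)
The plan is to construct an explicit majority out-dominating set around a longest directed path and then compare its size with the claimed bound. Write $p = l(D)+1$ for the number of vertices of a longest directed path $P : v_1 v_2 \cdots v_p$ in $D$, and set $W = V \setminus V(P)$, so $|W| = n - p$. The governing observation is that consecutive arcs of $P$ let a single chosen vertex out-dominate two vertices at once (each $v_i$ out-dominates $v_i$ and $v_{i+1}$), whereas a vertex of $W$ placed in the set out-dominates at least itself. Path vertices are thus ``twice as efficient,'' and the construction should exploit $P$ as much as possible before spending vertices of $W$.

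Concretely, I would pick the alternate vertices $v_1, v_3, v_5, \ldots$ so that $\lceil p/2 \rceil$ of them place all of $V(P)$ into the closed out-neighborhood; the arcs used belong to $D$, so this out-domination is genuine. If $p \geq \lceil n/2 \rceil$, the path alone already yields enough out-dominated vertices: choosing $\lceil \lceil n/2 \rceil / 2 \rceil = \lceil n/4 \rceil$ alternate vertices out-dominates at least $\lceil n/2 \rceil$ of them, giving $\gamma^+_m(D) \leq \lceil n/4 \rceil$. Otherwise $p < \lceil n/2 \rceil$, and I would take the $\lceil p/2 \rceil$ path vertices above together with $\lceil n/2 \rceil - p$ vertices of $W$ (there are enough, since $\lceil n/2 \rceil - p \leq n - p = |W|$); each of these out-dominates itself, so the set out-dominates at least $p + (\lceil n/2 \rceil - p) = \lceil n/2 \rceil$ vertices and is a MODS of size $\lceil p/2 \rceil + \lceil n/2 \rceil - p = \lceil n/2 \rceil - \lfloor p/2 \rfloor$. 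One may phrase this uniformly through Observation~\ref{lema 0} by passing to the spanning subdigraph formed by $P$ and the vertices of $W$ taken as isolated vertices.

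It then remains to verify two arithmetic inequalities. The first, $\lceil n/4 \rceil \leq \lceil (2n - l(D) - 1)/4 \rceil$, is immediate, since $p \leq n$ forces $2n - p \geq n$. The second, $\lceil n/2 \rceil - \lfloor p/2 \rfloor \leq \lceil (2n - l(D) - 1)/4 \rceil = \lceil (2n - p)/4 \rceil$, is where the genuine care lies: bounding $\lceil n/2 \rceil \leq (n+1)/2$ and $\lfloor p/2 \rfloor \geq (p-1)/2$ reduces it to a comparison whose slack works out to roughly $p/4 - 1$, so the inequality is comfortable once $p \geq 4$ and must be checked directly in the small cases $p \in \{1,2,3\}$. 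I expect this ceiling/floor bookkeeping, rather than the construction itself, to be the main obstacle.

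Finally, for sharpness I would exhibit digraphs attaining equality. The directed path $D = P_n$ has $l(D) = n-1$, so the bound reads $\lceil n/4 \rceil$, which equals $\gamma^+_m(P_n)$ by Observation~\ref{obs 2}; the directed cycle $C_n$ likewise has $l(C_n) = n-1$ and gives the same equality. At the opposite extreme, the arcless digraph has $l(D) = 0$ and $\gamma^+_m(D) = \lceil n/2 \rceil = \lceil (2n-1)/4 \rceil$, again meeting the bound. Any of these families shows that the bound cannot be improved in general.
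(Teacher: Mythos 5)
Your proof is correct, and it is worth comparing with the paper's own, because although both arguments start from the same decomposition --- a longest directed path $P$ on $p=l(D)+1$ vertices together with the leftover set $W=V\setminus V(P)$ --- the way you finish is genuinely different, and in fact more careful than what the paper does. The paper takes $S_1$ to be a minimum MODS of $P$ (of size $\left\lceil \frac{p}{4}\right\rceil$, by Observation \ref{obs 2}) and $S_2\subseteq W$ with $|S_2|=\left\lceil \frac{n-p}{2}\right\rceil$, and then asserts the identity $\left\lceil \frac{p}{4}\right\rceil+\left\lceil \frac{n-p}{2}\right\rceil=\left\lceil \frac{2n-p}{4}\right\rceil$. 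That identity is false in general, because a sum of ceilings can exceed the ceiling of the sum by one: for $n=6$ and $l(D)=4$ the left side is $2+1=3$ while the right side is $\left\lceil \frac{7}{4}\right\rceil=2$, so the paper's construction only certifies the weaker bound $\left\lceil \frac{p}{4}\right\rceil+\left\lceil \frac{n-p}{2}\right\rceil$, which can overshoot the stated one. Your case split is exactly what repairs this off-by-one gap: when $p\geq\left\lceil \frac{n}{2}\right\rceil$ you stop harvesting the path as soon as $\left\lceil \frac{n}{2}\right\rceil$ vertices are covered, getting a MODS of size $\left\lceil \frac{n}{4}\right\rceil\leq\left\lceil \frac{2n-p}{4}\right\rceil$; when $p<\left\lceil \frac{n}{2}\right\rceil$ you cover all of $V(P)$ by alternate path vertices (each chosen vertex covering two) and top up from $W$, getting size $\left\lceil \frac{n}{2}\right\rceil-\left\lfloor \frac{p}{2}\right\rfloor$, and the comparison with $\left\lceil \frac{2n-p}{4}\right\rceil$ then genuinely closes: your reduction to $p\geq 4$ is right, the deferred checks at $p\in\{1,2,3\}$ do go through (I verified them; several hold with equality), and the appeal to Observation \ref{lema 0} via the spanning subdigraph consisting of $P$ plus isolated vertices is legitimate since your set only uses arcs of $P$. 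Sharpness via directed paths is the same in both treatments, and your additional extremal families (directed cycles, arcless digraphs) also attain the bound. In short: same skeleton, but your two-case bookkeeping is what actually proves the stated ceiling bound, whereas the paper's one-line arithmetic does not.
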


\begin{proof}  Let $P$ be a longest directed path in $D.$   Let  $S_1$ be a minimum  MODS of  $P,$ and
let  $S_2\subseteq  V(D)\setminus V(P)$ such that $|S_2|=\lceil\frac{|V(D)\setminus V(P)|}2 \rceil.$ Clearly $S=S_1\cup S_2$  is a MODS of $D$ and hence $\gamma^+_m(D)\leq |S_1|+|S_2|=\lceil \frac{l(D)+1}{4}\rceil+\lceil\frac{n- l(D)-1}2 \rceil=\lceil \frac{2n-l(D)-1}{4} \rceil.$ The bound is trivially attained for directed paths.
\end{proof}

\begin{prop}  Let $c(D)$ denote the length of a longest directed cycle in $D$.
  Then $\gamma^+_m(D)\leq \lceil \frac {2n-c(D)}4\rceil,$ and the bound is sharp.
\end{prop}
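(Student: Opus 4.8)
The plan is to imitate the proof of the previous proposition, using a longest directed cycle in place of a longest directed path. Let $C$ be a longest directed cycle of $D$, so that $C$ has $c=c(D)$ vertices, and let $S_1$ be a minimum MODS of the directed cycle $C$. By Observation \ref{obs 2}, $|S_1|=\lceil c/4\rceil$, and by the definition of a MODS, $S_1$ out-dominates at least $\lceil c/2\rceil$ of the vertices of $C$. Since $C$ is a subdigraph of $D$, we have $N^+_C[S_1]\subseteq N^+_D[S_1]$. Now choose any $S_2\subseteq V(D)\setminus V(C)$ with $|S_2|=\lceil (n-c)/2\rceil$ and set $S=S_1\cup S_2$. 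As $N^+_C[S_1]\subseteq V(C)$ is disjoint from $S_2$ and each vertex of $S_2$ lies in its own closed out-neighborhood,
\[
|N^+_D[S]|\ \ge\ |N^+_C[S_1]|+|S_2|\ \ge\ \left\lceil \frac{c}{2}\right\rceil+\left\lceil \frac{n-c}{2}\right\rceil\ \ge\ \frac{n}{2},
\]
so $S$ is a MODS of $D$ and $\gamma^+_m(D)\le \lceil c/4\rceil+\lceil (n-c)/2\rceil$.

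What remains is to compare this quantity with $\lceil (2n-c)/4\rceil$, and this purely arithmetic reconciliation is the step I expect to be the only real obstacle. Writing $c=4q+r$ with $0\le r\le 3$ and distinguishing the two parities of $n-c$, one verifies that
\[
\left\lceil \frac{c}{4}\right\rceil+\left\lceil \frac{n-c}{2}\right\rceil=\left\lceil \frac{2n-c}{4}\right\rceil
\]
holds in every case \emph{except} when $c\equiv 1,2\pmod 4$ and $n-c$ is odd; in those two residue classes the left-hand side exceeds the right-hand side by exactly $1$ (the gap is caused by the inequality $\lceil x\rceil+\lceil y\rceil\ge\lceil x+y\rceil$ being strict there). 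Thus the construction above already establishes the bound outside these classes.

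In the exceptional classes I would strengthen the first step rather than the estimate. For every $c\ge 2$ one has $2\lceil c/4\rceil\le c$, so the $\lceil c/4\rceil$ vertices of a minimum MODS of $C$ can be distributed around the cycle with pairwise disjoint closed out-neighborhoods; when $c\equiv 1,2\pmod 4$ they then out-dominate $2\lceil c/4\rceil=\lceil c/2\rceil+1$ vertices of $C$, one more than a MODS of $C$ strictly needs. This surplus lets me instead take $|S_2|=\lceil (n-c)/2\rceil-1$ (which is nonnegative and at most $n-c$ when $n-c$ is odd) while the displayed chain of inequalities still yields $|N^+_D[S]|\ge n/2$. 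The resulting MODS has size $\lceil c/4\rceil+\lceil (n-c)/2\rceil-1=\lceil (2n-c)/4\rceil$, so the bound is recovered in exactly the cases where the first count was too large.

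Finally, for sharpness I would take $D=C_n$, the directed cycle on $n$ vertices. Here $c(C_n)=n$, so the bound becomes $\lceil (2n-n)/4\rceil=\lceil n/4\rceil$, which is precisely $\gamma^+_m(C_n)$ by Observation \ref{obs 2}; hence equality can occur and the bound is best possible.
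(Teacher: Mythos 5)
Your construction is the same one the paper intends: its proof of this proposition just says it is ``similar to that of the previous proposition,'' i.e.\ take a minimum MODS $S_1$ of a longest directed cycle $C$, adjoin a set $S_2$ containing half the remaining vertices, and count. Where you genuinely go beyond the paper is the arithmetic, and you are right that this is the delicate point. The paper (in the path version it defers to) asserts the identity $\lceil\frac{l+1}{4}\rceil+\lceil\frac{n-l-1}{2}\rceil=\lceil\frac{2n-l-1}{4}\rceil$ without justification, and as you observe, the corresponding identity $\lceil\frac{c}{4}\rceil+\lceil\frac{n-c}{2}\rceil=\lceil\frac{2n-c}{4}\rceil$ is false exactly when $c\equiv 1,2\pmod 4$ and $n-c$ is odd (e.g.\ $c=5$, $n=6$ gives $2+1=3$ on the left but $\lceil 7/4\rceil=2$ on the right); the same defect is present in the paper's own proof of the path proposition. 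Your repair is correct: on a directed cycle every closed out-neighborhood has size exactly $2$, and since $2\lceil c/4\rceil\le c$ for all $c\ge 2$, the $\lceil c/4\rceil$ vertices of $S_1$ can be placed pairwise non-consecutively, so in the exceptional residue classes they out-dominate $2\lceil c/4\rceil=\lceil c/2\rceil+1$ vertices of $C$; that surplus of one legitimately allows $|S_2|=\lceil\frac{n-c}{2}\rceil-1$ (nonnegative because $n-c$ odd forces $\lceil\frac{n-c}{2}\rceil\ge 1$), and the resulting set has size exactly $\lceil\frac{2n-c}{4}\rceil$ while still covering at least $\lceil n/2\rceil$ vertices. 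The sharpness argument via $D=C_n$ coincides with the paper's. In short: same decomposition, but your proof is complete where the paper's is not --- the case analysis you flagged as ``the only real obstacle'' is precisely the gap in the published argument.
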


\begin{proof}
The proof is similar to that of the previous proposition . The bound is trivially attained for directed cycles.
\end{proof}

\begin{theorem}\label{thm 3}
  For any digraph $D,$   $\gamma^+_m(D) = \gamma^+(D)$ if, and only if, $\Delta^+(D)= n-1.$
\end{theorem}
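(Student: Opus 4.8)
The plan is to first record the easy inequality $\gamma^+_m(D)\le\gamma^+(D)$, valid for every digraph, and then treat the two implications separately. For the inequality, observe that if $S$ is any out-dominating set then $N^+[S]=V$, so $|N^+[S]|=n\ge\frac n2$ and $S$ is in particular a MODS; taking minimum cardinalities gives $\gamma^+_m(D)\le\gamma^+(D)$. Consequently the asserted equality is equivalent to the reverse inequality $\gamma^+_m(D)\ge\gamma^+(D)$, and the whole theorem reduces to understanding precisely when a majority out-dominating set can be made strictly smaller than an out-dominating set.

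For the direction $\Delta^+(D)=n-1\Rightarrow\gamma^+_m(D)=\gamma^+(D)$ I would argue directly. A vertex $u$ with $d^+(u)=n-1$ satisfies $N^+[u]=V$, so $\{u\}$ is an out-dominating set and $\gamma^+(D)=1$. Since any MODS is nonempty (as $N^+[\emptyset]=\emptyset$), we get $1\le\gamma^+_m(D)\le\gamma^+(D)=1$, forcing equality.

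The substantive direction is the converse, which I would prove in contrapositive form: if $\Delta^+(D)\le n-2$ then $\gamma^+_m(D)<\gamma^+(D)$. Note that $\Delta^+(D)\le n-2$ is exactly the statement that no single vertex out-dominates $D$, hence equivalent to $\gamma^+(D)=k\ge 2$. Fix a minimum out-dominating set $S=\{s_1,\dots,s_k\}$, so that $N^+[S]=V$, and for each $j$ let $C_j$ denote the set of vertices that lie in $N^+[s_j]$ but in no other $N^+[s_i]$. The sets $C_1,\dots,C_k$ are pairwise disjoint subsets of $V$, so $\sum_j|C_j|\le n$ and some index $j_0$ satisfies $|C_{j_0}|\le\frac nk$. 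Deleting $s_{j_0}$ removes from the coverage exactly the vertices of $C_{j_0}$ (a vertex survives the deletion if and only if it is still covered by some remaining $s_i$), whence $|N^+[S\setminus\{s_{j_0}\}]|=n-|C_{j_0}|\ge n-\frac nk\ge\frac n2$, the last step using $k\ge 2$. Thus $S\setminus\{s_{j_0}\}$ is a MODS of cardinality $k-1$, giving $\gamma^+_m(D)\le k-1<k=\gamma^+(D)$, as required.

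The main obstacle is the averaging step in the last paragraph: one must delete a vertex of $S$ so that not too much coverage is lost, and the clean device is the pigeonhole bound $\min_j|C_j|\le\frac nk$ on the privately covered sets, combined with the hypothesis $k\ge 2$ which makes $n-\frac nk\ge\frac n2$. I would pay particular attention to the boundary case $k=2$, where $n-|C_{j_0}|\ge\frac n2$ may hold with equality, and I would verify carefully the identity $N^+[S\setminus\{s_{j_0}\}]=V\setminus C_{j_0}$, since it is this description of the lost vertices that makes the counting argument exact.
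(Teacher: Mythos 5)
Your proof is correct, but the decisive step takes a genuinely different route from the paper's. Both arguments prove the contrapositive of the hard direction ($\Delta^+(D)\le n-2$, hence $\gamma^+(D)=k\ge 2$, implies $\gamma^+_m(D)<\gamma^+(D)$), and both treat the other direction as immediate. Where you delete a single carefully chosen vertex from a minimum out-dominating set $S$ --- using the pairwise disjointness of the exclusive-coverage sets $C_j$ and pigeonhole to find $j_0$ with $|C_{j_0}|\le \frac nk$, so that $N^+[S\setminus\{s_{j_0}\}]=V\setminus C_{j_0}$ still has at least $n-\frac nk\ge\frac n2$ elements --- the paper instead splits $S$ arbitrarily into two nonempty disjoint parts $S_1\cup S_2$ and observes that $V=N^+[S]=N^+[S_1]\cup N^+[S_2]$ forces $|N^+[S_i]|\ge\frac n2$ for some $i$; that $S_i$ is then a MODS of cardinality at most $k-1$. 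The paper's union-bound split is shorter, and the same device is reused right away: splitting into balanced halves yields Proposition \ref{thm 4}, $\gamma^+_m(D)\le\lceil\frac{\gamma^+(D)}2\rceil$. Your averaging argument is a bit longer but more constructive: it exhibits a specific vertex of $S$ whose removal preserves majority out-domination, verifies the exact identity $N^+[S\setminus\{s_{j_0}\}]=V\setminus C_{j_0}$, and gives the sharper count $n-\frac nk$, which strictly exceeds $\frac n2$ when $k>2$. Your handling of the easy direction (a vertex of out-degree $n-1$ gives $\gamma^+(D)=1$, and any MODS is nonempty, so $\gamma^+_m(D)=1$) spells out what the paper dismisses as obvious, which is a point in your write-up's favor.
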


\begin{proof}
Let $D$ be a digraph with $\Delta^+(D)\leq n-2$; then  $\gamma^+(D)\geq 2.$ Let $S$ be a $\gamma^+$-set of $D,$ and let
$S=S_1\cup S_2$ with $|S_1|\geq1,\ |S_2|\geq1,$ and $S_1\cap S_2=\emptyset.$ Since $|N^+[S]|=n,$ then either $|N^+[S_1]|\geq \frac{n}2$ or $|N^+[S_2]|\geq \frac{n}2.$ If follows that at least one of $S_1$ and $S_2$ is a MODS of $D.$  Therefore, $ \gamma^+_m(D)< |S|= \gamma^+(D).$ The converse is obvious.
\end{proof}

\begin{prop}\label{thm 4}
  For any digraph $D,$   $ \gamma^+_m(D) \leq \lceil\frac{\gamma^+(D)}2\rceil.$ The bound is sharp.
\end{prop}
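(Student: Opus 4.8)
The plan is to adapt the even-splitting argument already used in the proof of Theorem~\ref{thm 3}. First I would fix a $\gamma^+$-set $S$ of $D$, so that $|S| = \gamma^+(D)$ and $N^+[S] = V$, whence $|N^+[S]| = n$. When $|S| = 1$ the set $S$ is itself a MODS, since then $|N^+[S]| = n \ge \frac{n}{2}$, and the inequality reads $1 \le \lceil \frac{1}{2}\rceil = 1$; so I would dispose of that case at once and assume $|S| \ge 2$.

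The key step is to partition $S$ as evenly as possible, writing $S = S_1 \cup S_2$ with $S_1 \cap S_2 = \emptyset$, $|S_1| = \lceil \frac{|S|}{2}\rceil$, and $|S_2| = \lfloor \frac{|S|}{2}\rfloor$; since $|S| \ge 2$, both parts are non-empty. Because closed out-neighborhoods distribute over unions, $N^+[S_1] \cup N^+[S_2] = N^+[S] = V$, so $|N^+[S_1]| + |N^+[S_2]| \ge |N^+[S_1] \cup N^+[S_2]| = n$. Hence at least one of the two closed out-neighborhoods has cardinality at least $\frac{n}{2}$, and the corresponding $S_i$ is a MODS of $D$. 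Since both parts satisfy $|S_i| \le \lceil \frac{|S|}{2}\rceil$, that MODS has cardinality at most $\lceil \frac{\gamma^+(D)}{2}\rceil$, which yields $\gamma^+_m(D) \le \lceil \frac{\gamma^+(D)}{2}\rceil$.

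For sharpness I would exhibit the directed path $P_n$. By Observation~\ref{obs 1} and Observation~\ref{obs 2} we have $\gamma^+(P_n) = \lceil \frac{n}{2}\rceil$ and $\gamma^+_m(P_n) = \lceil \frac{n}{4}\rceil$, so it remains only to confirm the identity $\lceil \frac{1}{2}\lceil \frac{n}{2}\rceil\rceil = \lceil \frac{n}{4}\rceil$, a standard nested-ceiling fact (iterated integer division multiplies the divisors). This gives $\gamma^+_m(P_n) = \lceil \frac{\gamma^+(P_n)}{2}\rceil$, so the bound is attained.

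I expect no serious obstacle here: the inequality drops out of the union bound on cardinalities exactly as in Theorem~\ref{thm 3}. The only points demanding a little care are that the \emph{unequal} partition still keeps both halves within $\lceil \frac{|S|}{2}\rceil$ (so whichever half turns out to be the MODS respects the claimed bound, even the larger one), and the verification of the ceiling identity for the directed-path example.
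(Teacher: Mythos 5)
Your proof is correct and follows essentially the same route as the paper's: split a $\gamma^+$-set into two nearly equal halves and use $|N^+[S_1]|+|N^+[S_2]|\geq |N^+[S]|=n$ to conclude one half is a MODS. The only cosmetic difference is that you dispose of the base case directly via $|S|=1$ (with an explicit check of the ceiling identity for sharpness on $P_n$), whereas the paper routes that case through Theorem~\ref{thm 3} with the equivalent hypothesis $\Delta^+(D)=n-1$.
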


\begin{proof}
Let $S$ be a $\gamma^+$-set of $D.$ If $\Delta^+(D)= n-1,$ the result follows from Theorem \ref{thm 3}. Suppose $\Delta^+(D)\leq n-2.$ Then $|S|>1,$ so there are two disjoint non-empty sets $S_1$ and $S_2$ such that $S=S_1\cup S_2$ with $|S_1|=\lceil\frac{\gamma^+(D)}2 \rceil,\   |S_2|=\lfloor\frac{\gamma^+(D)}2\rfloor.$ Now, $N^+[S]=N^+[S_1]\cup N^+[S_2]$ implies that $n=|N^+[S]|\leq |N^+[S_1]|+ |N^+[S_2]|.$ Then either $|N^+[S_1]|\geq \lceil\frac{n}2\rceil$ or $|N^+[S_2]|\geq \lceil\frac{n}2\rceil,$ that is, at least one of $S_1$ and $S_2$ is a MODS of $D.$  Hence $ \gamma^+_m(D) \leq  \lceil\frac{\gamma^+(D)}2\rceil.$ Equality holds for directed paths and directed cycles, as follows from Observations \ref{obs 1} and \ref{obs 2}.
\end{proof}

\begin{obs}\label{obs 5}
  Let $D$ be a  digraph. Then   $ \gamma^+_m(D) =1$ if, and only if, there exists one vertex $v\in D$ such that
   $d^+(v)\geq \lceil\frac{n}2\rceil-1.$
\end{obs}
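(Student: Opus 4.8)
The plan is to unravel the definition of $\gamma^+_m(D)=1$ into a condition on a single closed out-neighborhood, and then reduce that condition to the stated degree bound by elementary arithmetic. First I would observe that, since $n\geq 1$, the empty set is never a MODS: $N^+[\emptyset]=\emptyset$, so $|N^+[\emptyset]|=0<\frac{n}{2}$. Hence $\gamma^+_m(D)\geq 1$ always holds, and consequently $\gamma^+_m(D)=1$ is equivalent to the existence of a singleton $\{v\}$ that is itself a MODS, that is, a vertex $v$ with $|N^+[v]|\geq\frac{n}{2}$.

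Next I would rewrite $|N^+[v]|$ in terms of the out-degree. Because $D$ has no loops, $v\notin N^+(v)$, so $N^+[v]=N^+(v)\cup\{v\}$ is a disjoint union and $|N^+[v]|=d^+(v)+1$. Therefore $\{v\}$ is a MODS precisely when $d^+(v)+1\geq\frac{n}{2}$, i.e. $d^+(v)\geq\frac{n}{2}-1$.

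The last step is to replace the inequality $d^+(v)\geq\frac{n}{2}-1$ by the equivalent integer inequality $d^+(v)\geq\lceil\frac{n}{2}\rceil-1$. Since $d^+(v)$ is an integer, the condition $d^+(v)\geq\frac{n}{2}-1$ is equivalent to $d^+(v)\geq\lceil\frac{n}{2}-1\rceil$, and because $1$ is an integer we have $\lceil\frac{n}{2}-1\rceil=\lceil\frac{n}{2}\rceil-1$. This closes the equivalence in both directions simultaneously.

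I do not expect any genuine obstacle here; the statement is a direct decoding of the definition. The only point requiring a little care is the ceiling manipulation in the final step, where one must remember to use the integrality of $d^+(v)$ to pass from the real inequality to the ceiling form (a naive reading might mistakenly write $\lceil\frac{n}{2}\rceil$ instead of $\lceil\frac{n}{2}\rceil-1$). Checking the parity cases $n$ even and $n$ odd separately confirms the identity and guards against such an off-by-one error.
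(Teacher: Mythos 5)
Your proof is correct: the paper states this result as an observation with no proof at all, and your argument is precisely the direct definitional decoding the authors evidently considered immediate (a singleton $\{v\}$ is a MODS iff $d^+(v)+1\geq \frac{n}{2}$, which by integrality of $d^+(v)$ is the stated ceiling condition). Your care with the identity $\lceil \frac{n}{2}-1\rceil=\lceil \frac{n}{2}\rceil-1$ and with ruling out the empty set as a MODS makes the argument complete.
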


\begin{theorem}\label{thm 6}
  For any digraph $D$:
  \begin{enumerate}[(i)]
\item  $\lceil \frac{n}{2(\Delta^+(D)+1)} \rceil\leq \gamma^+_m(D).$ The bound is sharp.
 \item  Either $\gamma^+_m(D)=1$ or $\gamma^+_m(D)\leq \lceil \frac{n}2 \rceil-\Delta^+(D).$ In the second case the bound is sharp.
 \end{enumerate}
\end{theorem}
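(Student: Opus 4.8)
The plan is to prove the lower bound in part (i) by a direct covering/counting argument, and to prove part (ii) by splitting into the two regimes governed by Observation \ref{obs 5}, using a greedy construction in the nontrivial regime. Sharpness in both parts will be witnessed by digraphs assembled from disjoint out-stars together with vertices of out-degree $0$.

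For part (i), I would start from any minimum MODS $M$, so that $|M| = \gamma^+_m(D)$ and $|N^+[M]| \geq \frac n2$. Writing $N^+[M] = \bigcup_{u \in M} N^+[u]$ and using $|N^+[u]| = d^+(u) + 1 \leq \Delta^+(D) + 1$ for each $u$, subadditivity of cardinality under unions gives $\frac n2 \leq |N^+[M]| \leq \sum_{u \in M} |N^+[u]| \leq |M|\,(\Delta^+(D)+1)$. Dividing and using that $|M|$ is an integer yields $\gamma^+_m(D) = |M| \geq \lceil \frac{n}{2(\Delta^+(D)+1)} \rceil$. For sharpness I would take a disjoint union of $2k$ out-stars, each a center dominating $\Delta^+$ leaves (all leaves having out-degree $0$); then $n = 2k(\Delta^+ +1)$ and $\frac n2 = k(\Delta^+ + 1)$, so $k$ centers form a MODS while any set of fewer than $k$ vertices covers at most $(k-1)(\Delta^+ +1) < \frac n2$ vertices, whence $\gamma^+_m = k = \lceil \frac{n}{2(\Delta^+(D)+1)} \rceil$.

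For part (ii), let $v$ be a vertex of maximum out-degree, so $d^+(v) = \Delta^+(D)$. If $\Delta^+(D) \geq \lceil \frac n2 \rceil - 1$, then Observation \ref{obs 5} immediately gives $\gamma^+_m(D) = 1$, the first alternative. Otherwise $\Delta^+(D) \leq \lceil \frac n2 \rceil - 2$, and again by Observation \ref{obs 5} we have $\gamma^+_m(D) \geq 2$. In this regime I would build a MODS greedily: start with $\{v\}$, which already covers the $\Delta^+(D) + 1$ vertices of $N^+[v]$, and then add vertices one at a time from $V \setminus N^+[v]$, each contributing at least itself as a newly covered vertex. Since $|V \setminus N^+[v]| = n - \Delta^+(D) - 1 \geq \lceil \frac n2 \rceil - \Delta^+(D) - 1$, adding exactly $\lceil \frac n2 \rceil - \Delta^+(D) - 1$ such vertices produces a set of size $\lceil \frac n2 \rceil - \Delta^+(D)$ whose closed out-neighborhood has at least $\lceil \frac n2 \rceil$ vertices, giving the bound. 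Sharpness comes from the digraph in which $v$ dominates $\Delta^+(D)$ vertices and every other vertex has out-degree $0$: the only efficient choice is $v$, covering $\Delta^+(D)+1$ vertices, while every further chosen vertex covers exactly one new vertex, forcing $\gamma^+_m(D) = \lceil \frac n2 \rceil - \Delta^+(D)$.

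The inequalities themselves are routine; the care is needed in the sharpness constructions and in checking that the ceiling arithmetic is consistent with the case split at $\Delta^+(D) = \lceil \frac n2 \rceil - 1$. In particular I would verify that in the second case of (ii) the bound $\lceil \frac n2 \rceil - \Delta^+(D)$ is itself at least $2$, matching $\gamma^+_m(D) \geq 2$, and that the greedy step never runs out of vertices outside $N^+[v]$, which reduces to the trivial inequality $n \geq \lceil \frac n2 \rceil$.
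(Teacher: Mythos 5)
Your proposal is correct and follows essentially the same route as the paper: the same counting bound $\lceil \frac{n}{2}\rceil \leq |N^{+}[S]| \leq |S|(\Delta^{+}(D)+1)$ for part (i), and for part (ii) the same construction of a MODS consisting of a maximum-out-degree vertex $v$ together with $\lceil \frac{n}{2}\rceil - \Delta^{+}(D) - 1$ vertices chosen outside $N^{+}[v]$. The only difference is cosmetic: you build explicit sharpness examples (disjoint out-stars, and a single dominating vertex with out-isolated remainder), whereas the paper cites oriented double stars and the directed path $P_6$; both choices are valid witnesses.
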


\begin{proof}
 $(i).$ Let $S=\{v_1,\dots,v_{\gamma^+_m}\}$ be a $\gamma^+_m$-set of $D.$ Then $\lceil \frac{n}2\rceil\leq |N^+[S]|\leq \sum\limits_{v\in S}d^+(v)+\gamma^+_m(D)\leq \sum\limits_{v\in S}\Delta^+(D)+\gamma^+_m(D)=\gamma^+_m(D)(\Delta^+(D)+1).$ Equality is attained by double stars oriented so that the stem vertices have in-degree zero.\\

 $(ii).$ Suppose $\gamma^+_m(D)>1.$ Then $\Delta^+(D)<\lceil\frac{n}2\rceil-1.$ Let $v$ be a vertex with  $d^+(v)=\Delta^+(D),$ and let $S\subseteq V(D)\setminus N^+(v)$ such that $v\in S$ and
 $|S|=\lceil \frac{n}2\rceil-\Delta^+(D).$  Since $v \in S$ and $S\cap N^+(v)=\emptyset,$ it follows that $S$ is a MODS of $D,$ so the result follows. Equality holds for the directed path $P_6$, among others.
\end{proof}

\begin{cor}\label{prop 7}
  For every digraph $D,$  $\gamma^+_m(D)\leq \frac{n-\Delta^+(D)+1}2.$ The bound is sharp.
\end{cor}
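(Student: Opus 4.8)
The plan is to derive this directly from Theorem \ref{thm 6}(ii), which asserts that either $\gamma^+_m(D)=1$ or $\gamma^+_m(D)\leq \lceil \frac n2\rceil-\Delta^+(D)$, and then to dispatch each alternative by a short arithmetic comparison with the claimed bound $\frac{n-\Delta^+(D)+1}2$.

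First I would treat the case $\gamma^+_m(D)=1$. Here it suffices to check that $1\leq \frac{n-\Delta^+(D)+1}2$, which is equivalent to $\Delta^+(D)\leq n-1$. Since the out-degree of any vertex in a digraph on $n$ vertices (with neither loops nor multiple arcs) cannot exceed $n-1$, this always holds, so the bound is satisfied in this case.

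Next I would treat the remaining case $\gamma^+_m(D)\leq \lceil \frac n2\rceil-\Delta^+(D)$. It then remains only to prove the purely numerical inequality $\lceil \frac n2\rceil-\Delta^+(D)\leq \frac{n-\Delta^+(D)+1}2$. Clearing denominators, this is equivalent to $2\lceil \frac n2\rceil-n\leq \Delta^+(D)+1$. The left-hand side equals $0$ when $n$ is even and $1$ when $n$ is odd, hence is at most $1$, while the right-hand side is at least $1$ because $\Delta^+(D)\geq 0$; the inequality follows. Combining the two cases yields $\gamma^+_m(D)\leq \frac{n-\Delta^+(D)+1}2$.

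For sharpness I would \emph{not} reuse the extremal example of Theorem \ref{thm 6}(ii), since the path $P_6$ fails to attain equality here ($\gamma^+_m(P_6)=2$ while $\frac{n-\Delta^+(D)+1}2=3$); instead I would exhibit a digraph in which the first alternative is tight. The out-oriented star on $n$ vertices---a single vertex $v$ with arcs to each of the other $n-1$ vertices---has $\Delta^+(D)=n-1$ and $\gamma^+_m(D)=1$ (by Observation \ref{obs 5}, since $d^+(v)=n-1\geq \lceil \frac n2\rceil-1$), and then $\frac{n-\Delta^+(D)+1}2=\frac{n-(n-1)+1}2=1$, so equality holds. The only delicate point in the whole argument is the parity bookkeeping in the ceiling estimate of the second case; it is also worth noting that equality can arise from that branch as well, namely for digraphs with no arcs on an odd number of vertices, where $\gamma^+_m(D)=\lceil \frac n2\rceil=\frac{n+1}2$.
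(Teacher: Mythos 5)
Your proof is correct and takes essentially the same route as the paper: both deduce the inequality from Theorem~\ref{thm 6}(ii) by a short arithmetic comparison and establish sharpness via digraphs with $\Delta^+(D)=n-1$. If anything, your case split on the two alternatives of Theorem~\ref{thm 6}(ii) is slightly more careful than the paper's split on $\Delta^+(D)=n-1$ versus $\Delta^+(D)\leq n-2$, since the paper's second case tacitly ignores the possibility $\gamma^+_m(D)=1$ with $\Delta^+(D)\leq n-2$ (where its intermediate bound $\frac{n-2\Delta^+(D)+1}{2}$ can fail even though the final bound holds), a gap your first case disposes of explicitly.
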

\begin{proof}
If $\Delta^+(D)=n-1,$ then $\gamma^+_m(D)=1=\frac{n-\Delta^+(D)+1}2.$ Otherwise, Theorem \ref{thm 6} $(ii)$ implies that $\gamma^+_m(D)\leq \frac{n-2\Delta^+(D)+1}2\leq \frac{n-\Delta^+(D)+1}2.$ As already mentioned, equality holds for any digraph $D$ such that $\Delta^+(D)=n-1.$
\end{proof}

\begin{prop}\label{prop 8}
Let $D$ be a digraph which is not a totally disconnected digraph
of odd order. If $S$ is a minimal MODS of $D,$ then $V\setminus S$ is a
MODS of $D.$
\end{prop}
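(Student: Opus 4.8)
The plan is to bound the size of a minimal MODS and then split on the parity of $n$, with only the odd case being delicate. First I would record that every minimal MODS $S$ satisfies $|S|\le\lceil n/2\rceil$: any set $W$ with $|W|\ge\lceil n/2\rceil$ is automatically a MODS, since $W\subseteq N^+[W]$ gives $|N^+[W]|\ge|W|\ge n/2$, so if $|S|$ exceeded $\lceil n/2\rceil$ we could delete a vertex and still have a MODS, contradicting minimality. Writing $T=V\setminus S$, this yields $|T|=n-|S|\ge\lfloor n/2\rfloor$, and since $T\subseteq N^+[T]$ we get $|N^+[T]|\ge\lfloor n/2\rfloor$.

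If $n$ is even, then $\lfloor n/2\rfloor=n/2$ and we are already done, as $T$ is a MODS. The same inequality also disposes of the odd case whenever $|S|\le(n-1)/2$, since then $|T|\ge(n+1)/2>n/2$. Hence everything reduces to a single configuration: $n$ odd and $|S|=(n+1)/2$, so that $|T|=(n-1)/2$. Here the crude bound gives only $|N^+[T]|\ge(n-1)/2$, one short of the required $\lceil n/2\rceil$, so it suffices to exhibit a single arc from $T$ into $S$ (equivalently, one vertex of $N^+[T]\setminus T$).

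To produce such an arc I would squeeze structure out of minimality. In this configuration $|N^+[S\setminus\{v\}]|\le(n-1)/2$ for every $v\in S$, while $S\setminus\{v\}\subseteq N^+[S\setminus\{v\}]$ forces $|N^+[S\setminus\{v\}]|\ge(n-1)/2$; hence $N^+[S\setminus\{v\}]=S\setminus\{v\}$ for each $v$. Fixing $u\in S$ and intersecting the inclusions $N^+(u)\subseteq N^+[S\setminus\{v\}]=S\setminus\{v\}$ over all $v\in S\setminus\{u\}$ shows $N^+(u)\subseteq\{u\}$, i.e. $u$ has out-degree $0$. Thus no arc originates in $S$, so any arc into $S$ would necessarily come from $T$. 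The remaining job is to guarantee that at least one arc does enter $S$, and this is precisely the point at which the hypothesis that $D$ is not a totally disconnected digraph of odd order must be invoked.

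I expect this last step to be the main obstacle. The size bound and the even case are routine book-keeping, but in the odd, $|S|=(n+1)/2$ situation the argument genuinely depends on a non-degeneracy assumption strong enough to forbid a digraph all of whose arcs lie inside $T$ — for instance $S$ consisting of out-degree-zero vertices together with a lone arc inside $T$, which yields $N^+[T]=T$. Establishing that minimality together with the excluded-digraph hypothesis rules this out, and hence forces an arc from $T$ into $S$, is the crux on which the whole proposition turns; I would want to examine carefully whether "not totally disconnected of odd order" is by itself enough to exclude the offending configuration, or whether the intended reading is a stronger non-degeneracy condition such as the absence of isolated vertices.
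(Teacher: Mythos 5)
Your reduction to the critical case is exactly right, and the suspicion in your last paragraph is not a gap in your argument but a flaw in the proposition itself: the configuration you flagged is a genuine counterexample, so the case you left open cannot be closed. Concretely, take $n=5$ with vertex set $\{a,b,c,d,e\}$ and the single arc $ab$. Then $D$ is not totally disconnected, so the hypothesis is satisfied, and $S=\{c,d,e\}$ is a minimal MODS: $|N^+[S]|=3\geq\frac{5}{2}$, while deleting any vertex of $S$ leaves a set whose closed out-neighborhood has only $2$ elements. Yet $V\setminus S=\{a,b\}$ satisfies $N^+[\{a,b\}]=\{a,b\}$, of size $2<\frac{5}{2}$, so it is not a MODS. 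This is precisely your ``offending configuration'': every vertex of $S$ has out-degree (indeed total degree) zero, and the lone arc lies inside $T=V\setminus S$; the hypothesis ``not totally disconnected of odd order'' does not exclude it.

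For comparison, the paper's own proof fails at exactly the point you isolated. After disposing of totally disconnected digraphs of even order, it asserts that if $D$ is not totally disconnected, then every minimal MODS satisfies $|S|\leq\frac{n}{2}$. The only bound minimality actually yields is the one in your first paragraph, $|S|\leq\lceil\frac{n}{2}\rceil$, and for odd $n$ a minimal MODS of size $\frac{n+1}{2}$ can exist even when $D$ has arcs, as the example shows; no justification for the stronger bound is given, and none exists. The statement does become true if ``minimal'' is replaced by ``minimum'': if $n$ is odd and $D$ has an arc $uv$, then any set of size $\frac{n-1}{2}$ containing $u$ and avoiding $v$ is already a MODS, so $\gamma^+_m(D)\leq\frac{n-1}{2}$, whence the complement of any $\gamma^+_m(D)$-set has at least $\lceil\frac{n}{2}\rceil$ vertices and is automatically a MODS (the even case needs only your size bound). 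Alternatively, your proposed repair of the hypothesis works: if $D$ has fewer than $\lceil\frac{n}{2}\rceil$ isolated vertices, then in the critical case all $\frac{n+1}{2}$ vertices of $S$ would be isolated --- out-degree zero by your intersection argument, in-degree zero because no arc can come from $S$ and, in the bad case, none comes from $T$ --- a contradiction that forces the arc from $T$ into $S$ you needed.
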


\begin{proof}
Suppose $D$ is a totally disconnected (di)graph of even
order.  Then any minimal MODS of $D$ contains $\frac{n}2$ vertices
and hence its complement is also a MODS of $D.$
 Suppose $D$ is not a totally disconnected digraph, and let $S$ be a minimal
MODS of  $D.$ Therefore, $|S|\leq \frac{n}2$ and  $|V\setminus S|\geq \lceil\frac{n}2\rceil,$   which implies that
$V\setminus S$ is a MODS of $D.$
\end{proof}

\begin{theorem}
Let $S$ be a MODS of a digraph $D=(V,A).$ Then $S$ is
minimal if, and only if, one of the following conditions hold:

\begin{enumerate}[(i)]
\item $|N^{+}[S]|~>\left\lceil \frac{n}{2}\right\rceil $ and $\forall ~v\in
S,~|pn^+[v,S]|~>|N^{+}[S]|~-\left\lceil \frac{n}{2}\right\rceil .$

\item $|N^{+}[S]|~=\left\lceil \frac{n}{2}\right\rceil $ and $\forall ~v\in S,
$ either $v$ is an isolate in $D[S]$ or $pn^+(v,S)\neq \emptyset. $
\end{enumerate}
\end{theorem}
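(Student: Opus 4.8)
The plan is to reduce minimality to a per-vertex statement and then to translate the failure of the majority condition after deleting a single vertex into the language of private out-neighbours. By definition $S$ is a minimal MODS precisely when $S$ is a MODS and, for every $v\in S$, the set $S\setminus\{v\}$ is not a MODS, i.e. $|N^+[S\setminus\{v\}]|<\lceil \frac n2\rceil$. Since deleting $v$ can only delete vertices from the closed out-neighbourhood, the whole argument rests on understanding the set $L(v)=N^+[S]\setminus N^+[S\setminus\{v\}]$ of vertices that are out-dominated by $v$ but by no other member of $S$.

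First I would compute $L(v)$ explicitly. A vertex $w\in V\setminus S$ lies in $L(v)$ if and only if $v$ is its only in-neighbour in $S$, that is, iff $w\in pn^+(v,S)$; no vertex of $S\setminus\{v\}$ is ever lost, because each such vertex out-dominates itself; and $v$ itself is lost iff $v$ has no in-neighbour inside $S$. Hence $L(v)=pn^+(v,S)$ when $v$ has an in-neighbour in $S$, and $L(v)=pn^+[v,S]$ otherwise. Writing $k=|N^+[S]|$ and $\delta(v)=|L(v)|$, deletion of $v$ leaves $|N^+[S\setminus\{v\}]|=k-\delta(v)$, so minimality of $S$ is equivalent to the family of inequalities $\delta(v)>k-\lceil \frac n2\rceil$ for every $v\in S$.

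Next I would split on the size of $N^+[S]$, which is exactly the dichotomy appearing in the statement. If $k>\lceil \frac n2\rceil$, the inequality $\delta(v)>k-\lceil \frac n2\rceil$ is to be matched with the condition $|pn^+[v,S]|>k-\lceil \frac n2\rceil$ of $(i)$, using the computation of $L(v)$ above. If instead $k=\lceil \frac n2\rceil$, the inequality collapses to $\delta(v)\geq 1$, i.e. $L(v)\neq\emptyset$, which holds iff $pn^+(v,S)\neq\emptyset$ or $v$ is out-dominated by no other vertex of $S$; these are the two alternatives of $(ii)$. In each direction (necessity and sufficiency) the verification is done vertex by vertex.

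The delicate point, and the step I expect to be the main obstacle, is the exact bookkeeping of whether $v$ contributes itself to $L(v)$. The closed private out-neighbourhood $pn^+[v,S]$ always contains $v$, whereas $v$ is genuinely lost only when it has no in-neighbour within $S$; thus $\delta(v)$ and $|pn^+[v,S]|$ can differ by one, precisely when $v$ is out-dominated by another member of $S$. Controlling this off-by-one discrepancy, and checking that it is absorbed correctly into the strict inequality of $(i)$ and into the disjunction of $(ii)$, is where the argument must be carried out carefully; in particular one must confirm that the ``isolate'' alternative of $(ii)$ is the right way to record that $v$ receives no arc from $S\setminus\{v\}$, so that the quantity actually governing minimality is $\delta(v)$ rather than $|pn^+[v,S]|$ itself.
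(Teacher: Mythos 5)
Your reduction is correct and, as far as it goes, it is actually more careful than the paper's own proof: the paper simply asserts the identity $|N^{+}[S\setminus \{v\}]|=|N^{+}[S]|-|pn^+[v,S]|$ and never asks whether $v$ itself survives the deletion, whereas you compute the lost set $L(v)$ exactly ($L(v)=pn^+(v,S)$ when $v$ has an in-neighbour in $S\setminus\{v\}$, and $L(v)=pn^+[v,S]$ otherwise), and you correctly observe that minimality is equivalent to $\delta(v)>|N^{+}[S]|-\lceil \frac{n}{2}\rceil$ for every $v\in S$. The gap is that your plan stops exactly where the real work begins: you defer ``controlling the off-by-one discrepancy,'' hoping it is absorbed by the strict inequality in (i) and the disjunction in (ii). It is not absorbed; with the paper's definition of $pn^+[v,S]$ (which contains $v$ unconditionally) the theorem is in fact false, so no amount of bookkeeping will finish your plan as stated. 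Concretely, take $n=8$ with vertices $u,v,w,a,b$ and three isolated vertices, arcs $uv,\ ua,\ ub,\ vw$, and $S=\{u,v\}$. Then $|N^{+}[S]|=5>4=\lceil \frac{n}{2}\rceil$, $|pn^+[u,S]|=3>1$, and $|pn^+[v,S]|=2>1$, so (i) holds; yet $N^{+}[\{u\}]=\{u,v,a,b\}$ has size $4$, so $S\setminus\{v\}$ is still a MODS and $S$ is not minimal. This is exactly your discrepancy: $\delta(v)=1\not>1$ while $|pn^+[v,S]|=2>1$. The converse direction fails too, through the ``isolate'' clause you were suspicious of: with $n=5$, arcs $vu$ and $ua$, two isolated vertices, and $S=\{u,v\}$, the set $S$ is a minimal MODS with $|N^{+}[S]|=3=\lceil \frac{n}{2}\rceil$, but $v$ is not an isolate of $D[S]$ (the arc $vu$ lies in $D[S]$) and $pn^+(v,S)=\emptyset$, so (ii) fails.

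So your instinct about where the danger lies was exactly right, but flagging the danger is not the same as resolving it, and here resolving it requires amending the statement, not just the proof. What matters for $v$ is whether it has an \emph{in}-neighbour in $S\setminus\{v\}$: the theorem becomes true (and your outline completes into a correct proof, which is essentially the paper's argument done properly) once $|pn^+[v,S]|$ in (i) is replaced by $|L(v)|$ --- equivalently, once one adopts the standard convention that $v$ counts as its own private out-neighbour only when $N^-(v)\cap(S\setminus\{v\})=\emptyset$ --- and once ``$v$ is an isolate in $D[S]$'' in (ii) is replaced by ``$v$ has no in-neighbour in $S\setminus\{v\}$.'' The paper's proof commits precisely the two errors these counterexamples exhibit (the false identity in case (i), and the claim in case (ii) that a non-isolate of $D[S]$ without external private out-neighbours satisfies $|N^{+}[S\setminus\{v\}]|=\lceil \frac{n}{2}\rceil$), so a complete write-up of your plan should state the corrected conditions and then run your per-vertex computation of $L(v)$ through both cases.
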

\begin{proof}
Let $D=(V,A)\ $be a digraph. Let $S$ be a minimal MODS
of $D$ and take $v\in S.$ Assume  $|N^{+}[S]|~>\left\lceil \frac{n}{2}%
\right\rceil .$ Since $S\setminus \{v\}$ is not majority out-dominating, $%
|N^{+}[S\setminus \{v\}]|~=|N^{+}[S]|~-|pn^+[v,S]|~<\left\lceil \frac{n}{2}%
\right\rceil .$ Hence $|pn^+[v,S]|~>|N^{+}[S]|~-\left\lceil \frac{n}{2}%
\right\rceil .$ Now assume $|N^{+}[S]|~=\left\lceil \frac{n}{2}\right\rceil .
$ Since $S\setminus \{v\}$ is not majority out-dominating, $%
|N^{+}[S\setminus \{v\}]|~<\left\lceil \frac{n}{2}\right\rceil .$ If $v$ is
an isolate in $D[S],$ $|N^{+}[S\setminus \{v\}]|~\leq \left\lceil \frac{n}{2}%
\right\rceil -1<\left\lceil \frac{n}{2}\right\rceil .$  If $v$ is not an
isolate in $D[S],$ it must forcibly have an external private neighbor, for
otherwise we would have $|N^{+}[S\setminus \{v\}]|~=\left\lceil \frac{n}{2}%
\right\rceil .$

Now let $S$ be a MODS of $D$ such that (i) or (ii)
hold, and take $v\in S.$ Assume  $|N^{+}[S]|~>\left\lceil \frac{n}{2}%
\right\rceil .$ Since $|N^{+}[S\setminus \{v\}]|~=|N^{+}[S]|~-|pn^+[v,S]|$ and
$|pn^+[v,S]|~>|N^{+}[S]|~-\left\lceil \frac{n}{2}\right\rceil ,$ it follows
that $|N^{+}[S]|~-|pn^+[v,S]|~<\left\lceil \frac{n}{2}\right\rceil ,$ which
implies that $S$ is minimal. Now assume $|N^{+}[S]|~=\left\lceil \frac{n}{2}%
\right\rceil .$ If $v$ is an isolate in $D[S],$ $|N^{+}[S\setminus
\{v\}]|~\leq \left\lceil \frac{n}{2}\right\rceil -1<\left\lceil \frac{n}{2}%
\right\rceil .$ If $v$ is not an isolate in $D[S]$ but $pn^+(v,S)\neq
\emptyset ,$ it holds as well that $|N^{+}[S\setminus \{v\}]|~\leq
\left\lceil \frac{n}{2}\right\rceil -1<\left\lceil \frac{n}{2}\right\rceil .$
Therefore, if $S$ is a MODS of $D$ such that (i) or
(ii) hold, then $S$ is minimal.
\end{proof}

We now consider the effect on $\gamma^+_m(D)$ of  the removal of a vertex or   an arc from $D.$

\begin{theorem} \label{theorem1}
 Let $D$ be any digraph with $\gamma^+_{m}(D)=k.$ Let $v \in V(D)$ and $e \in A(D).$ Then
\begin{enumerate} [(i)]
\item $ k\leq  \gamma^+_{m}(D-e) \leq k+1,$
\item $k-1 \leq \gamma^+_{m}(D-v) \leq\max \{k, k-1+d^+(v)\}.$
\end{enumerate}
\end{theorem}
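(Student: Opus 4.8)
The plan is to bound $\gamma^+_m$ in each case by exhibiting explicit out-dominating sets, using Observation~\ref{lema 0} for the inequalities that follow from passing to a spanning subdigraph. For (i), the lower bound $k\leq\gamma^+_m(D-e)$ is immediate, since $D-e$ is a spanning subdigraph of $D$. For the upper bound I would start from a $\gamma^+_m(D)$-set $S$ with $|S|=k$ and track how $N^+[S]$ changes when the arc $e=xy$ is deleted: since $N^+_{D-e}[S]\supseteq N^+_D[S]\setminus\{y\}$, at most one vertex can leave the closed out-neighbourhood (the vertex $y$, and only when $x\in S$, $y\notin S$, and $x$ is the sole in-neighbour of $y$ inside $S$). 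Hence $S\cup\{y\}$ is a MODS of $D-e$, giving $\gamma^+_m(D-e)\leq k+1$.

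For (ii) the lower bound comes from reinserting $v$: if $S'$ is a $\gamma^+_m(D-v)$-set, then $N^+_D[S'\cup\{v\}]\supseteq N^+_{D-v}[S']\cup\{v\}$ has at least $\lceil\frac{n-1}{2}\rceil+1\geq\frac n2$ elements, so $S'\cup\{v\}$ is a MODS of $D$ and $k\leq\gamma^+_m(D-v)+1$. For the upper bound I would fix a $\gamma^+_m(D)$-set $S$ and split on whether $v\in S$. If $v\notin S$, then $N^+_{D-v}[S]\supseteq N^+_D[S]\setminus\{v\}$, and $S$ is the natural candidate for a MODS of $D-v$, which would give $\gamma^+_m(D-v)\leq k$. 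If $v\in S$, I would replace $v$ by its whole out-neighbourhood, setting $S^\ast=(S\setminus\{v\})\cup N^+(v)$; here $N^+_{D-v}[S^\ast]\supseteq N^+_D[S]\setminus\{v\}$ while $|S^\ast|\leq k-1+d^+(v)$. Combining the two cases would yield the bound $\max\{k,k-1+d^+(v)\}$.

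The step I expect to be the real obstacle is the threshold bookkeeping that both sub-cases of the upper bound in (ii) hide. Deleting $v$ lowers the order from $n$ to $n-1$, so the size required of a closed out-neighbourhood drops from $\frac n2$ to $\frac{n-1}{2}$, but the same deletion can erase one vertex from $N^+[S]$. When $n$ is odd the relaxed threshold absorbs this unit loss and the argument closes cleanly; when $n$ is even and the chosen set is tight, with $|N^+_D[S]|=\frac n2$, the constructed set falls exactly one vertex short of $\lceil\frac{n-1}{2}\rceil$. I would therefore spend most of the effort on this even-and-tight configuration, trying to show either that a minimum MODS avoiding the tightness can always be chosen, or that some out-neighbour of $v$ already supplies the missing dominated vertex, since without such an argument the naive construction threatens to exceed the stated budget by one when $d^+(v)$ is small.
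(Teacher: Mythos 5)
Your treatment of (i) and of the lower bound in (ii) coincides with the paper's proof and is correct (your uniform ``add the head $y$'' step in (i) even avoids the paper's four-case split). But the obstacle you isolated in the upper bound of (ii) is not a bookkeeping detail you failed to settle: it is a genuine flaw, and in fact the stated inequality is false. The paper's own proof commits exactly the error you were suspicious of: it asserts without justification that if $v\in V\setminus S$ then $S$ is a MODS of $D-v$ (and that $(S\setminus\{v\})\cup N^{+}(v)$ is one when $v\in S$), silently ignoring that when $n$ is even the integer threshold does not drop, $\left\lceil \frac{n-1}{2}\right\rceil=\frac{n}{2}$, while deleting $v$ can remove one vertex from the closed out-neighborhood.

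Concretely, let $V(D)=\{v,a,b,c\}$ with the single arc $av$. Then $\{a\}$ is a MODS of $D$, since $|N^{+}[\{a\}]|=|\{a,v\}|=2=\frac{n}{2}$, so $k=\gamma^{+}_{m}(D)=1$; moreover $d^{+}(v)=0$, so the claimed bound reads $\gamma^{+}_{m}(D-v)\leq\max\{1,0\}=1$. But $D-v$ is the empty digraph on $\{a,b,c\}$, where a MODS $T$ must satisfy $|N^{+}[T]|\geq\left\lceil \frac{3}{2}\right\rceil=2$ and every singleton covers only itself, hence $\gamma^{+}_{m}(D-v)=2$. So neither of your proposed repair strategies (choosing a non-tight minimum MODS, or extracting the missing vertex from $N^{+}(v)$) can work: here the minimum MODS is unique, tight, and has $v$ in its closed out-neighborhood, and $N^{+}(v)=\emptyset$. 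What survives of the statement is precisely what your own computation yields: the bound $\gamma^{+}_{m}(D-v)\leq\max\{k,k-1+d^{+}(v)\}+1$ holds in general (after deleting $v$ at most one covered vertex is lost, and adding any still-uncovered vertex restores the count), and the bound as printed does hold whenever $n$ is odd, because then $\left\lceil \frac{n-1}{2}\right\rceil=\left\lceil \frac{n}{2}\right\rceil-1$ absorbs the unit loss, exactly as you observed.
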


\begin{proof} $(i)$  If $S$ is a $\gamma^+_{m}$-set of  $D-e,$  then  Observation \ref{lema 0} implies that
 $S$ is a MODS of $D.$ Hence  $k\leq \gamma^+_{m}(D-e).$

Now, let $S$ be a $\gamma^+_{m}$-set of $D$ and let $e=uv\in A$.  If $\{u,v\}\subseteq S,$ $\{u,v\}\subseteq V\setminus S,$ or $u\in V\setminus S$ and $v\in S,$ then $S$ is a MODS of $D-e.$
If  $u\in S$  and  $v\in V\setminus S,$  then $S'=S\cup\{v\}$ is  a MODS of $D-e.$
Thus  in all  cases  $\gamma^+_{m}(D-e) \leq k+1.$\\

 $(ii)$ Let $S$ be a $\gamma^+_{m}$-set of $D-v.$   Then
 $S'=S\cup\{v\}$ is a MODS of $D,$ so  $\gamma^+_{m}(D) \leq \gamma^+_{m}(D-v)+1.$  Thus $k-1\leq \gamma^+_{m}(D-v).$

 Now, let $S$ be a $\gamma^+_{m}$-set of $D.$  If $v\in V\setminus S,$ then $S$ is a MODS of $D-v.$   If $v\in S,$ then $S\cup N^+(v)$ is a MODS of $D-v.$   Thus, $\gamma^+_{m}(D-v)\leq\max \{k, k-1+d^+(v)\}.$
\end{proof}

Now we consider the effect on $\gamma^+_m(D)$ of   adding   an arc to $D.$

\begin{prop}
 Let $D$ be any digraph with $\gamma^+_{m}(D)=k,$  $e \in A(\overline{D}).$ Then $\gamma^+_{m}(D)-1 \leq \gamma^+_{m}(D+e)\leq\gamma^+_{m}(D).$
\end{prop}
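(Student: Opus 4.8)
The plan is to establish the two inequalities separately, mirroring the structure of the arc-deletion argument in Theorem \ref{theorem1}$(i)$ but running the comparison in the opposite direction, since here we add an arc $e$ to $D$ rather than remove one. Write $e=uv$, so that $D+e$ has the single extra arc $uv$ compared with $D$. The key observation is that $D$ is a spanning subdigraph of $D+e$ (same vertex set, fewer arcs), so Observation \ref{lema 0} applies with the roles reversed to give one of the bounds immediately.

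First I would prove the upper bound $\gamma^+_m(D+e)\leq \gamma^+_m(D)$. Since $D$ is a spanning subdigraph of $D+e$, Observation \ref{lema 0} yields $\gamma^+_m(D+e)\leq \gamma^+_m(D)=k$ directly; alternatively one notes that any MODS $S$ of $D$ satisfies $N^+_D[S]\subseteq N^+_{D+e}[S]$, so $|N^+_{D+e}[S]|\geq |N^+_D[S]|\geq \lceil n/2\rceil$, hence $S$ remains a MODS in $D+e$. This shows adding an arc cannot increase the set majority out-domination number.

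Second I would prove the lower bound $\gamma^+_m(D+e)\geq k-1$. The natural approach is to take a $\gamma^+_m$-set $S$ of $D+e$ and exhibit a MODS of $D$ of size at most $|S|+1$. The only way $S$ can fail to be a MODS of $D$ is if the extra arc $uv$ contributes the vertex $v$ to the closed out-neighborhood in $D+e$ but not in $D$; this can happen only when $u\in S$, $v\in V\setminus S$, and $v$ is out-dominated in $D+e$ solely because of $e$. In that situation $S'=S\cup\{v\}$ recovers the lost vertex, so $S'$ is a MODS of $D$ with $|S'|\leq |S|+1$, giving $k=\gamma^+_m(D)\leq \gamma^+_m(D+e)+1$, i.e. $\gamma^+_m(D+e)\geq k-1$. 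In every other case ($\{u,v\}\subseteq S$, $\{u,v\}\subseteq V\setminus S$, or $u\in V\setminus S$) the set $S$ is already a MODS of $D$ and the bound holds trivially.

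I do not expect a genuine obstacle here: both directions are short counting arguments of the same flavor as those already carried out for arc deletion in Theorem \ref{theorem1}$(i)$, essentially run in reverse. The only point requiring minor care is verifying in the lower-bound argument that adding the single vertex $v$ to $S$ suffices to compensate for the one vertex that the added arc $e$ could have been responsible for out-dominating, which is immediate since $v\in N^+_{D}[S']$ by $v\in S'$. It would also be worth remarking, as the companion deletion result does, on whether the bounds are sharp, but that is not required by the statement.
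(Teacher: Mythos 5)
Your proposal is correct and follows essentially the same route as the paper: the upper bound via the spanning-subdigraph observation (the paper cites Theorem \ref{theorem1}$(i)$, whose relevant inequality is itself just Observation \ref{lema 0}), and the lower bound by taking a $\gamma^+_m$-set $S$ of $D+e$ and noting that $S\cup\{v\}$ is a MODS of $D$. Your case analysis for when $S$ can fail to be a MODS of $D$ is a slightly more detailed justification than the paper gives, but the argument is the same.
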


\begin{proof}
 Let $e=uv \in A(\overline{D}).$ The fact that $\gamma^+_{m}(D+e) \leq\gamma^+_{m}(D)$ follows from Theorem \ref{theorem1}.  On the other hand, let $S$ be a $\gamma^+_{m}$-set of  $D+uv.$ Then $S\cup \{v\}$ is a $\gamma^+_{m}$-set of  $D,$ so $\gamma^+_{m}(D)-1 \leq \gamma^+_{m}(D+e).$
\end{proof}

\begin{prop}
Let $D$ be a digraph, and let $D'$ be the digraph obtained by reversing   the direction of a single arc of $D.$ Then
$|\gamma^+_m(D)-\gamma^+_m(D')|\leq1.$
\end{prop}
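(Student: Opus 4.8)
The plan is to compare $D$ and $D'$ directly, where $D'$ is obtained by reversing a single arc $e = uv$ into $vu$. The key observation is that reversing one arc changes the out-neighborhood of exactly two vertices: $u$ loses $v$ from $N^+(u)$, while $v$ gains $u$ in $N^+(v)$; the out-neighborhoods of all other vertices are unchanged. My strategy is to take a $\gamma^+_m$-set $S$ of $D$ and show that by adding at most one vertex to $S$ I obtain a MODS of $D'$, which gives $\gamma^+_m(D') \leq \gamma^+_m(D) + 1$; then by symmetry (since $D$ is likewise obtained from $D'$ by reversing the single arc $vu$) the reverse inequality $\gamma^+_m(D) \leq \gamma^+_m(D') + 1$ follows, and together these yield $|\gamma^+_m(D) - \gamma^+_m(D')| \leq 1$.

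First I would let $S$ be a $\gamma^+_m$-set of $D$, so $|N^+_D[S]| \geq \frac{n}{2}$. The only vertex that could be ``lost'' from the closed out-neighborhood when passing from $D$ to $D'$ is $v$, and this can only happen if $v \in N^+_D[S]$ was witnessed \emph{solely} through the arc $uv$ with $u \in S$ (i.e. $v \notin S$, and no other vertex of $S$ out-dominates $v$, and $v$ itself is not reached any other way). In every other case $N^+_{D'}[S] \supseteq N^+_D[S]$ already has size at least $\frac{n}{2}$, so $S$ itself remains a MODS of $D'$ and nothing needs to be added. In the single problematic case, I would set $S' = S \cup \{v\}$; then $v \in S' \subseteq N^+_{D'}[S']$, so $|N^+_{D'}[S']| \geq |N^+_D[S]| \geq \frac{n}{2}$, making $S'$ a MODS of $D'$ of cardinality at most $\gamma^+_m(D) + 1$.

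The argument is genuinely symmetric because reversing $vu$ in $D'$ recovers $D$, so applying the identical reasoning with the roles of $D$ and $D'$ interchanged gives $\gamma^+_m(D) \leq \gamma^+_m(D') + 1$. Combining the two inequalities closes the proof.

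I do not expect a serious obstacle here; the content is a careful bookkeeping of how the closed out-neighborhood of a fixed set changes under a single arc reversal. The one point that requires care is verifying that adding the single vertex $v$ genuinely repairs any deficiency, i.e. that at most one vertex can drop out of $N^+[S]$ under the reversal, so that a single addition always suffices. Since only $u$'s and $v$'s out-neighborhoods change and the potential loss is confined to $v$, this is immediate, but I would state it explicitly to make the bound of $1$ transparent.
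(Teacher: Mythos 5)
Your proof is correct. Note, however, that the paper itself contains no written argument for this proposition: its ``proof'' is a single sentence deferring to the corresponding result for out-domination in \cite{GDB}. Your explicit argument---reversing $uv$ alters only the out-neighborhoods of $u$ and $v$; for a fixed set $S$ the sole vertex that can drop out of $N^{+}[S]$ is $v$, and only when $u\in S$ and $v$ was reached exclusively through the arc $uv$; adding $v$ to $S$ repairs this, giving $\gamma^{+}_{m}(D')\leq\gamma^{+}_{m}(D)+1$; and the symmetry of the reversal operation gives the reverse inequality---is precisely the perturbation bookkeeping that the cited proof carries out for ordinary out-domination, here adapted to the majority threshold $\frac{n}{2}$ (the adaptation is painless because your repaired set covers at least as many vertices in $D'$ as $S$ covers in $D$, so the cardinality threshold is preserved). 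So in substance your route is the one the paper points to, but written out in full; what your version buys is that the proposition becomes verifiable without consulting \cite{GDB}, and it makes transparent why the bound is exactly $1$: a single reversal can delete at most one vertex from any closed out-neighborhood. One pedantic remark: since the paper forbids multiple arcs but allows opposite pairs, the reversal is implicitly restricted to arcs $uv$ with $vu\notin A(D)$; this is an issue with the statement's hypotheses, not with your proof.
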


\begin{proof}
The proof is identical to the proof of the corresponding  result for out-domination given in  \cite{GDB}.
\end{proof}

\begin{dfn} Let $D=(V,A)$ be any digraph. An arc $e\in A(D)$ is $\gamma^+_{m}$-critical if $\gamma^+_{m}(D-e)=\gamma^+_{m}(D)+1.$
\end{dfn}

\begin{theorem}
An arc $e=uv$ of a digraph $D$ is $\gamma^+_{m}$-critical if, and only if, for every $\gamma^+_{m}(D)$-set $S$ we have that $u\in S,$ $v\in pn^+(u,S),$  and  $|N^+[S]|=\lceil\frac{n}2\rceil.$
\end{theorem}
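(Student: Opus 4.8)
The plan is to leverage the arc-deletion bounds already established. By Observation~\ref{lema 0} (applied to the spanning subdigraph $D-e$) together with Theorem~\ref{theorem1}$(i)$, we have $\gamma^+_m(D-e)\in\{k,k+1\}$, so $e$ is $\gamma^+_m$-critical if and only if $\gamma^+_m(D-e)=k+1$, i.e.\ if and only if \emph{no} set of cardinality $k$ is a MODS of $D-e$. First I would observe that for any $T\subseteq V$ one has $N^+_{D-e}[T]\subseteq N^+_D[T]$, since deleting an arc can only shrink closed out-neighborhoods; hence a set of size $k$ that is a MODS of $D-e$ is automatically a MODS of $D$, and therefore a $\gamma^+_m(D)$-set. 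Consequently $e$ is critical precisely when every $\gamma^+_m(D)$-set fails to remain a MODS after the deletion of $e$, and the whole problem reduces to deciding, for a fixed $\gamma^+_m(D)$-set $S$, exactly when $|N^+_{D-e}[S]|<\lceil\frac n2\rceil$.

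The technical core is to compare $N^+_D[S]$ with $N^+_{D-e}[S]$ for $e=uv$. I would run a short case analysis: if $u\notin S$, or if $v\in S$, then the arc $uv$ contributes nothing new to the closed out-neighborhood and $N^+_{D-e}[S]=N^+_D[S]$; if $u\in S$ and $v\notin S$, then $v$ is lost from the closed out-neighborhood exactly when $u$ is the only in-neighbor of $v$ lying in $S$, that is, when $v\in pn^+(u,S)$. Thus $|N^+_{D-e}[S]|=|N^+_D[S]|-1$ when $u\in S$ and $v\in pn^+(u,S)$, and $|N^+_{D-e}[S]|=|N^+_D[S]|$ otherwise. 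This step is pure bookkeeping with the definition of external private out-neighbor, and although it is the heart of the argument it presents no real obstacle beyond being careful that $v\in pn^+(u,S)$ already forces $v\notin S$.

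Finally I would combine the two observations. Since $S$ is a $\gamma^+_m(D)$-set, $|N^+_D[S]|\geq\lceil\frac n2\rceil$, and because the count can drop by at most one we always have $|N^+_{D-e}[S]|\geq\lceil\frac n2\rceil-1$. Hence $S$ ceases to be a MODS of $D-e$ if and only if the count actually drops \emph{and} it was sitting exactly at the threshold, i.e.\ if and only if $u\in S$, $v\in pn^+(u,S)$, and $|N^+[S]|=\lceil\frac n2\rceil$ all hold. Feeding this equivalence back into the reduction of the first paragraph yields that $e$ is $\gamma^+_m$-critical if and only if every $\gamma^+_m(D)$-set $S$ satisfies these three conditions, which is exactly the claimed characterization. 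The one point I would state with care is that the three conditions must hold for \emph{every} such $S$ simultaneously, matching the universal quantifier in the statement; the main obstacle, such as it is, lies entirely in the neighborhood bookkeeping of the middle paragraph rather than in any delicate estimate.
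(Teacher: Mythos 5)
Your proposal is correct and is essentially the same argument as the paper's: both hinge on the bookkeeping fact that deleting $e=uv$ removes at most the single vertex $v$ from $N^{+}[S]$, and removes it exactly when $u\in S$ and $v\in pn^+(u,S)$, combined with the monotonicity of Observation \ref{lema 0} and the bounds of Theorem \ref{theorem1}$(i)$ to pass between minimum sets of $D$ and of $D-e$. The only difference is presentational: you organize the proof as a single chain of equivalences, whereas the paper argues the two implications separately by contradiction.
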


\begin{proof}
Let $e=uv$ be a $\gamma^+_{m}$-critical arc, and let $S$ be a $\gamma^+_{m}(D)$-set. If $u \notin S,$ then $S$ is a $\gamma^+_{m}(D-e)$-set, which is a contradiction, so $u\in S.$ If $v\notin pn^+(u,S),$ we have that $S$ is a $\gamma^+_{m}(D-e)$-set, again a contradiction. Moreover, if $|N^+[S]|>\lceil\frac{n}2\rceil,$ then $S$ is as well a $\gamma^+_{m}(D-e)$-set. Therefore, for every $\gamma^+_{m}(D)$-set $S$ we have that $u\in S,$ $v\in pn^+(u,S),$  and  $|N^+[S]|=\lceil\frac{n}2\rceil.$

Conversely, suppose that for every $\gamma^+_{m}(D)$-set $S$ we have that $u\in S,$ $v\in pn^+(u,S),$  and  $|N^+[S]|=\lceil\frac{n}2\rceil.$ It follows that for every $\gamma^+_{m}(D)$-set $S,$ $|N_{D-e}^+[S]|= |N_D^+[S]|-1=\lceil\frac{n}2\rceil -1,$ so no $\gamma^+_{m}(D)$-set is a MODS of $D-e.$ Now suppose there is a $\gamma^+_{m}(D-e)$-set $S'$ with $|S'|=\gamma^+_{m}(D),$ then from Observation \ref{lema 0} it follows that $S'$ is a MODS of $D,$ which is a contradiction. Therefore, $e=uv$ is a $\gamma^+_{m}$-critical arc.
\end{proof}

\section{Oriented graphs}
Let $G=(V,E)$ be a graph. An orientation of $G$ is a digraph $D=(V,A)$ such that $uv\in E \Leftrightarrow (uv\in A$ or $vu\in A),$ and $|E|=|A|.$

\begin{center}
\unitlength 1mm 
\linethickness{0.4pt}
\ifx\plotpoint\undefined\newsavebox{\plotpoint}\fi 
\begin{picture}(60.5,27.75)(0,0)
\put(12.75,19){\circle*{1.5}}
\put(12.75,9){\circle*{1.5}}
\put(27,9){\circle*{1.5}}
\put(27,18.75){\circle*{1.5}}
\put(19.5,27){\circle*{1.5}}
\put(13,19.5){\vector(-3,-4){.07}}\multiput(18.75,26.5)(-.03362573,-.04093567){171}{\line(0,-1){.04093567}}
\put(12.5,18.5){\vector(0,-1){9}}
\put(13,9){\vector(1,0){13.5}}
\put(27,9.5){\vector(0,1){9}}
\put(19.5,26.75){\vector(-1,1){.07}}\multiput(26.75,19)(-.03372093,.03604651){215}{\line(0,1){.03604651}}
\put(45.5,19){\circle*{1.5}}
\put(45.5,9){\circle*{1.5}}
\put(59.75,9){\circle*{1.5}}
\put(59.75,18.75){\circle*{1.5}}
\put(52.25,27){\circle*{1.5}}
\put(45.75,19.5){\vector(-3,-4){.07}}\multiput(51.5,26.5)(-.03362573,-.04093567){171}{\line(0,-1){.04093567}}
\put(45.75,9){\vector(1,0){13.5}}
\put(52.25,26.75){\vector(-1,1){.07}}\multiput(59.5,19)(-.03372093,.03604651){215}{\line(0,1){.03604651}}
\put(59.75,18){\vector(0,-1){8.25}}
\put(45.25,9.75){\vector(0,1){8.25}}
\put(6.0,15){$D_1$}
\put(39.0,15){$D_2$}
\end{picture}

Figure 2: Two orientations of the cycle $C_5$
\end{center}

Two  distinct orientations of a given graph can have different majority domination numbers, as shown in  Figure 2 for the orientations $D_1$ and $D_2$  of the  cycle $C_5,$ where  $\gamma^{+}_m(D_1)=2$     and $\gamma^{+}_m(D_2)=1.$   This
suggests the following definitions:

\begin{dfn}
Let $G$ be a graph. The lower orientable set majority  domination number of $G$ is $ dom^{+}_m(G)=\min\{\gamma^{+}_m(D)\ :\ D\ {\rm is\ an\ orientation\ of}\ G \},$ and the   upper orientable set majority domination number of $G$ is $ DOM^{+}_m(G)=\max\{\gamma^{+}_m(D)\ :\ D\ {\rm is\ an\ orientation\ of}\ G \}.$
\end{dfn}

These concepts are inspired in the notions of lower orientable  domination number  $dom(G)$ and upper orientable  domination number $DOM(G)$, introduced by  Chartrand et al. in \cite{GDB}.

\begin{theorem}\label{thm 9}
For every graph $G,$ $dom^{+}_m(G)=\gamma_m(G).$
\end{theorem}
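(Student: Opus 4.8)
The plan is to prove the equality $dom^+_m(G)=\gamma_m(G)$ by establishing two inequalities, exploiting the relationship between majority out-domination in an orientation and ordinary majority domination in the underlying graph $G$. The key observation is that for any orientation $D$ of $G$ and any set $S\subseteq V$, we always have $N^+_D[S]\subseteq N_G[S]$, since every out-neighbor in $D$ is a neighbor in $G$. Consequently, if $S$ is a MODS of some orientation $D$, then $|N_G[S]|\geq |N^+_D[S]|\geq \frac n2$, so $S$ is a majority dominating set of $G$. Applying this to a $\gamma^+_m(D)$-set for the orientation $D$ achieving the minimum gives $\gamma_m(G)\leq \gamma^+_m(D)=dom^+_m(G)$, which is one of the two desired inequalities.

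For the reverse inequality $dom^+_m(G)\leq\gamma_m(G)$, I would start from a $\gamma_m(G)$-set $M$, so that $|M|=\gamma_m(G)$ and $|N_G[M]|\geq\frac n2$, and construct a specific orientation $D$ of $G$ in which $M$ is a MODS. The idea is to orient the edges so that $M$ out-dominates everything it dominates in $G$: for every edge $uv$ with $u\in M$ (and, when $v\notin M$, so that $v$ becomes an out-neighbor of $u$), orient it from $u$ to $v$. More carefully, I would orient each edge incident to $M$ away from $M$ whenever possible, and orient the remaining edges (those with both endpoints outside $M$, or internal to $M$) arbitrarily. Under such an orientation, every vertex in $N_G[M]$ lies in $N^+_D[M]$, so $|N^+_D[M]|\geq |N_G[M]|\geq\frac n2$, making $M$ a MODS of $D$. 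This yields $dom^+_m(G)\leq\gamma^+_m(D)\leq |M|=\gamma_m(G)$.

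The point requiring care is the orientation step: one must check that orienting ``away from $M$'' can be done consistently for all edges meeting $M$. An edge with exactly one endpoint in $M$ is unambiguously oriented away from that endpoint, and an edge with both endpoints in $M$ can be oriented either way without harm (both endpoints are already in $N^+_D[M]$ trivially, since $M\subseteq N^+_D[M]$). Edges with no endpoint in $M$ are irrelevant to whether $M$ out-dominates $N_G[M]$, and may be oriented arbitrarily. Thus no conflict arises, and the construction is always realizable; this is the only genuinely constructive part of the argument, and it is where I expect the main (though modest) obstacle to lie.

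Combining the two inequalities gives $dom^+_m(G)=\gamma_m(G)$, completing the proof. I would present the argument by first stating the containment $N^+_D[S]\subseteq N_G[S]$ as the unifying fact, then deriving $\gamma_m(G)\leq dom^+_m(G)$ immediately from it, and finally giving the explicit orientation to obtain the reverse inequality.
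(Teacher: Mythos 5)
Your proposal is correct and follows essentially the same route as the paper: the reverse inequality comes from noting that a MODS of any orientation is a majority dominating set of $G$, and the forward inequality from orienting all edges between a $\gamma_m(G)$-set $M$ and $V\setminus M$ away from $M$, with the remaining edges oriented arbitrarily. Your extra discussion of why this orientation is consistent is a harmless elaboration of the same construction the paper uses.
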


\begin{proof}
Let $G=(V,E)$ be a graph and let   $S$ be a minimum majority dominating set of $G.$  Now we consider the
following orientation $D=(V,A)$ of $G$: For every two adjacent vertices $u\in S$
and $v\in V\setminus S,$ let $uv\in A$;
edges between vertices of $S$ and edges between vertices of $V\setminus S$
can be oriented arbitrarily. Then $S$ is a  MODS of $D$ and so $dom^{+}_m(G)\leq \gamma^+_m(D)\leq |S|=\gamma_m(G).$

Now, let $D'$ be an orientation of $G$ for which $dom^{+}_m(G)= \gamma^+_m(D'),$  and let $S'$ be a $\gamma^+_m(D')$-set. It is clear that $S'$ is a  majority dominating set of $G.$ Therefore, $\gamma_m(G)\leq  |S'|=\gamma^+_m(D')=dom^{+}_m(G).$
\end{proof}

We now proceed to  determine the upper and lower orientable  set majority domination numbers for several classes of graphs:

\begin{prop}
\end{prop}
\begin{enumerate}[(i)]
\item  For $ n\geq1,$ we have $DOM^+_m(K_n)= dom^{+}_m(K_n)=1.$
\item For $ n\geq1,$ $dom^+_m(P_n)=\left\lceil \frac {n}6\right\rceil.$
 \item  For  $n\geq3, \
  dom^+_m(C_n)=\left\lceil \frac {n}6\right\rceil.$
\item For any two integers $r,s$ with $r\leq s,$
$ dom^{+}_m(K_{r,s}) =1.$

 \end{enumerate}

\begin{proof}

 (i)  Since for any tournament $T,$  $\sum\limits_{v\in V(T)}d^+(v)= \frac{(n)(n-1)}2 ,$ it follows that there exists a vertex $u$ in $T$ with
 $d^+(u)\geq \left\lceil \frac{n-1}{2}\right\rceil.$ This completes the proof.

 (ii) From $(i)$ of  Theorem \ref{thm 6} it follows that $dom^+_m(P_n)\geq\lceil \frac {n}6\rceil.$ Now, consider the following orientation $D=(V,A)$ of $P_n$: We number the vertices of $V(P_n)$ in order, that is, $V(P_n)=\{v_1,...,v_n\},$ with $N(v_i)=\{v_{i-1},v_{i+1}\}$ for $i\in\{2,...,n-1\},$ $N(v_1)=\{v_2\},$ and $N(v_n)=\{v_{n-1}\}$; we orient the edges of $P_n$ in such a way that for a vertex $v_i\in V,$ $d^+(v_i)=2$ if, and only if, $i \equiv 2 \ (\mod 3).$ Then $S=\{v_i:i\equiv 2 \ (\mod 3), \ i\leq \lceil \frac {n}2\rceil\}$ is a MODS of $D,$ and $|S|=\lceil \frac {n}6\rceil.$ Therefore, $dom^+_m(P_n)=\lceil \frac {n}6\rceil.$

  (iii) The proof is similar to that of (ii).

  (iv) Let $K_{r,s}=(V,E),$ and let $V_1=\{v_1,v_2,\dots,v_r\}$ and $V_2=\{u_1,u_2,\dots,u_s\}$ be the bipartition of $V.$  Let $D$ be an orientation of $G$ such that $d^-(v_1)=0.$  Then $\{v_1\}$ is a MODS of $D$, so  $dom^+_m(G)= 1.$
 \end{proof}

 \begin{theorem}\cite {GDB}\label{thm 011}
  For every integer $n\geq3,$
   $DOM(P_n)=DOM(C_n)=\left\lceil \frac {n}2\right\rceil.$
 \end{theorem}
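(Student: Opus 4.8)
The plan is to prove the equality by bracketing $DOM$ between two matching bounds, treating $P_n$ and $C_n$ in parallel. For the lower bound, I note that the standard directed path and the standard directed cycle are particular orientations of $P_n$ and $C_n$, and Observation \ref{obs 1} already gives that each of these has out-domination number exactly $\lceil \frac{n}{2}\rceil$. Since $DOM(G)$ is the maximum of $\gamma^+(D)$ over all orientations $D$ of $G$, exhibiting a single orientation attaining $\lceil \frac{n}{2}\rceil$ immediately yields $DOM(P_n)\geq \lceil \frac{n}{2}\rceil$ and $DOM(C_n)\geq \lceil \frac{n}{2}\rceil$. This half requires no new work.

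The substance is the upper bound: every orientation $D$ of $P_n$ (resp. $C_n$) satisfies $\gamma^+(D)\leq \lceil \frac{n}{2}\rceil$. First I would label the vertices in path (resp. cyclic) order as $v_1,\dots,v_n$ and group them into consecutive pairs $\{v_1,v_2\},\{v_3,v_4\},\dots$, leaving a single leftover vertex when $n$ is odd. The crucial observation is that within each pair the two vertices are adjacent in the underlying graph, so $D$ contains exactly one arc joining them; I put the tail of that arc into $S$, together with any leftover vertex. Then every vertex not placed in $S$ is the head of the internal arc of its pair and is therefore out-dominated by the tail I selected, so $S$ is an out-dominating set. Counting gives $|S|\leq \lceil \frac{n}{2}\rceil$ in every parity case, hence $\gamma^+(D)\leq \lceil \frac{n}{2}\rceil$, and taking the maximum over orientations gives $DOM\leq \lceil \frac{n}{2}\rceil$. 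Combining the two bounds yields $DOM(P_n)=DOM(C_n)=\lceil \frac{n}{2}\rceil$.

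The main point to be careful about -- and really the only obstacle -- is the bookkeeping of the upper bound across parities and the cycle's wrap-around. I must check that pairing consecutive vertices always leaves at most one unpaired vertex, that the leftover vertex is simply placed in $S$ (where it dominates itself), and that for odd cycles the count $\frac{n-1}{2}+1$ still equals $\lceil \frac{n}{2}\rceil$. I would also verify explicitly that selecting the tail of each internal arc covers the partner vertex regardless of which of the two directions the arc takes, so that no vertex is left undominated. No deeper structural analysis of the orientation is needed: the degree-at-most-two constraint of paths and cycles is exactly what makes the simple pairing cover everything.
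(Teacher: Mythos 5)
Your proposal is correct, but note that the paper itself offers no proof at all for this statement: Theorem \ref{thm 011} is imported verbatim from the Chartrand--VanderJagt--Yue paper \cite{GDB}, with the citation standing in for the argument. So there is nothing in the paper to match your proof against; what you have done is supply a self-contained proof of a quoted result. Your argument is sound on both sides. The lower bound is exactly as you say: the transitive-free ``standard'' orientations are particular orientations, and Observation \ref{obs 1} gives $\gamma^+ = \lceil n/2 \rceil$ for them, so the maximum over orientations is at least that. The upper bound via pairing consecutive vertices is also airtight: each pair $\{v_{2i-1},v_{2i}\}$ spans an edge of the underlying path or cycle, hence exactly one arc of $D$, and taking its tail (plus the leftover vertex when $n$ is odd) produces an out-dominating set of size exactly $\lceil n/2 \rceil$ in every parity case, for every orientation $D$. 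The one presentational quibble is that you only need the arcs inside the chosen pairs; the degree-at-most-two structure you invoke at the end is not actually what makes the argument work (the same pairing argument bounds $DOM(G)$ by $\lceil n/2\rceil$ for any graph with a hamiltonian path), and recognizing this would make your proof slightly more general than the stated theorem. The benefit of your route is that the paper's reliance on an external reference is replaced by an elementary two-paragraph argument using only tools already present in the paper.
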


 \begin{prop}
 For every integer $n\geq3,$
   $DOM^+_m(P_n)=DOM^+_m(C_n)=\left\lceil \frac {n}4\right\rceil.$
 \end{prop}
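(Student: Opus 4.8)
The plan is to prove the two inequalities $DOM^{+}_m(P_n)\ge\lceil\frac{n}{4}\rceil$ and $DOM^{+}_m(P_n)\le\lceil\frac{n}{4}\rceil$ separately (and likewise for $C_n$), since together they force the claimed equalities. The lower bounds are immediate and require no work: the directed path and the directed cycle are themselves orientations of the underlying undirected $P_n$ and $C_n$, so by the very definition of $DOM^{+}_m$ as a maximum over orientations, together with Observation \ref{obs 2}, we obtain $DOM^{+}_m(P_n)\ge\gamma^{+}_m(\text{directed }P_n)=\lceil\frac{n}{4}\rceil$ and $DOM^{+}_m(C_n)\ge\gamma^{+}_m(\text{directed }C_n)=\lceil\frac{n}{4}\rceil$.

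The substance of the argument is the upper bound, namely that \emph{every} orientation $D$ of $P_n$ (respectively $C_n$) satisfies $\gamma^{+}_m(D)\le\lceil\frac{n}{4}\rceil$. I would prove this by exhibiting, in an arbitrary orientation, a MODS of size exactly $k=\lceil\frac{n}{4}\rceil$. Label the vertices $v_1,\dots,v_n$ consecutively along the path (cycle) and group the first $2k$ of them into the $k$ disjoint consecutive pairs $\{v_1,v_2\},\{v_3,v_4\},\dots,\{v_{2k-1},v_{2k}\}$. Within each pair the edge joining its two vertices is assigned some direction by the orientation; put into $S$ the tail of that arc, i.e.\ the vertex from which it emanates. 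Since that chosen vertex has its pair-partner as an out-neighbour, it out-dominates both members of its pair, whence $N^{+}[S]\supseteq\{v_1,\dots,v_{2k}\}$ and therefore $|N^{+}[S]|\ge 2k$.

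It then remains only to verify two elementary numerical facts. First, $2\lceil\frac{n}{4}\rceil\ge\lceil\frac{n}{2}\rceil$, which guarantees $|N^{+}[S]|\ge\lceil\frac{n}{2}\rceil$ so that $S$ is genuinely a MODS; second, $2\lceil\frac{n}{4}\rceil\le n$ for $n\ge 3$, which guarantees that the first $2k$ vertices actually exist and that the $k$ pairs are disjoint, so the $k$ selected tails are distinct and $|S|=k$. Both are routine checks according to the residue of $n$ modulo $4$. Together they give $\gamma^{+}_m(D)\le|S|=\lceil\frac{n}{4}\rceil$ for every orientation $D$, hence $DOM^{+}_m(P_n)\le\lceil\frac{n}{4}\rceil$; the identical pairing argument applies to $C_n$, the closing arc between $v_n$ and $v_1$ simply never being used.

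The only genuine obstacle is this upper bound, and the pairing device is precisely what overcomes it: in any orientation each pair-internal edge has a tail that covers both of its endpoints "for free," so one selected vertex always purchases two out-dominated vertices, exactly matching the rate of $\lceil\frac{n}{4}\rceil$ selected vertices per $\lceil\frac{n}{2}\rceil$ covered. This makes transparent the fact that the extremal (worst) orientation is the fully directed one, whose value $\lceil\frac{n}{4}\rceil$ is already supplied by Observation \ref{obs 2}.
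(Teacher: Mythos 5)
Your proof is correct, but it takes a genuinely different route from the paper on the upper bound. The paper's proof is a three-line citation chain: the lower bound comes from Observation \ref{obs 2} (as in your argument), while the upper bound combines Proposition \ref{thm 4} ($\gamma^+_m(D)\leq\lceil\gamma^+(D)/2\rceil$ for any digraph) with Theorem \ref{thm 011} of Chartrand et al.\ ($DOM(P_n)=DOM(C_n)=\lceil\frac{n}{2}\rceil$), giving $\gamma^+_m(D)\leq\left\lceil\frac{1}{2}\left\lceil\frac{n}{2}\right\rceil\right\rceil=\lceil\frac{n}{4}\rceil$ for every orientation $D$. You instead prove the upper bound from scratch: pair the first $2\lceil\frac{n}{4}\rceil$ vertices into $\lceil\frac{n}{4}\rceil$ consecutive disjoint edges of the path (or cycle), and in each pair select the tail of the arc the orientation assigns to that edge, so that each selected vertex out-dominates both endpoints of its pair; your two arithmetic checks ($2\lceil\frac{n}{4}\rceil\geq\lceil\frac{n}{2}\rceil$ and $2\lceil\frac{n}{4}\rceil\leq n$ for $n\geq3$) both hold, and the selected tails are distinct since the pairs are disjoint. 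What your approach buys is self-containedness and transparency: it avoids the external citation to the orientable domination number of paths and cycles and avoids the halving argument of Proposition \ref{thm 4}, and it exhibits an explicit MODS in every orientation, making clear that one chosen vertex always covers two vertices. What the paper's approach buys is brevity and generality: given the machinery already established, the result is an immediate corollary, and the same template ($\gamma^+_m\leq\lceil\gamma^+/2\rceil$ applied to $DOM$) works for any graph class whose upper orientable domination number is known.
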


\begin{proof}
  The result follows from Observation \ref{obs 2},    Theorem \ref{thm 4}, and Theorem  \ref{thm 011}.
\end{proof}

 \begin{prop}
 For $n\geq3,$  $DOM^+_m(K_{1,n-1})=\lfloor \frac{n-1}2\rfloor.$
 \end{prop}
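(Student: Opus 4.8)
The plan is to describe every orientation of $K_{1,n-1}$ through the behaviour of its center $c$ and to read off the closed out-neighborhoods directly. Given an orientation $D$, split the $n-1$ leaves into $L^+=\{\ell:c\ell\in A(D)\}$ (the out-neighbors of $c$) and $L^-=\{\ell:\ell c\in A(D)\}$ (the in-neighbors of $c$), so $|L^+|+|L^-|=n-1$. First I would record the three facts that drive the whole argument: $N^+[c]=\{c\}\cup L^+$; $N^+[\ell]=\{\ell\}$ for $\ell\in L^+$; and $N^+[\ell]=\{\ell,c\}$ for $\ell\in L^-$. In particular, $c$ is the only vertex that can be out-dominated by a vertex other than itself, which is the structural reason the bounds will be tight.

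For the upper bound $DOM^+_m(K_{1,n-1})\le\lceil n/2\rceil-1$, I would show $\gamma^+_m(D)\le\lceil n/2\rceil-1$ for every orientation $D$, splitting into two cases. If $L^-=\emptyset$ then $N^+[c]=V$, so $\{c\}$ is a MODS and $\gamma^+_m(D)=1$, which is at most $\lceil n/2\rceil-1$ since $n\ge3$. If $L^-\neq\emptyset$, pick one leaf $\ell_0\in L^-$ together with any $\lceil n/2\rceil-2$ of the remaining leaves to form $M$; then $N^+[M]\supseteq\{c,\ell_0\}\cup(M\setminus\{\ell_0\})$ has at least $\lceil n/2\rceil$ elements, so $M$ is a MODS with $|M|=\lceil n/2\rceil-1$. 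Here one checks $n-1\ge\lceil n/2\rceil-1$, so enough leaves are available.

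For the matching lower bound I would exhibit the extremal orientation $D_0$ in which every leaf points at $c$, i.e. $L^+=\emptyset$. The crux is the observation that in $D_0$ one has $N^+(v)\subseteq\{c\}$ for every vertex $v$, whence $N^+[M]\subseteq M\cup\{c\}$ and thus $|N^+[M]|\le|M|+1$ for all $M\subseteq V$. Since a MODS needs $|N^+[M]|\ge\lceil n/2\rceil$, this forces $|M|\ge\lceil n/2\rceil-1$; conversely any $\lceil n/2\rceil-1$ leaves out-dominate themselves together with $c$, giving exactly $\lceil n/2\rceil$ vertices. Hence $\gamma^+_m(D_0)=\lceil n/2\rceil-1$, so $DOM^+_m(K_{1,n-1})\ge\lceil n/2\rceil-1$.

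Combining the two estimates yields $DOM^+_m(K_{1,n-1})=\lceil n/2\rceil-1$, and the proof finishes with the elementary bookkeeping identity $\lceil n/2\rceil-1=\lfloor\frac{n-1}2\rfloor$. I expect the only delicate points to be this ceiling-to-floor conversion and the inequality $|N^+[M]|\le|M|+1$ in $D_0$; the latter is what singles out the all-arcs-into-$c$ orientation as extremal and makes the bound sharp.
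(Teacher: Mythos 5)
Your proof is correct and takes essentially the same approach as the paper: a case analysis over orientations of the star giving the upper bound, with the all-arcs-into-the-center orientation identified as extremal for the lower bound. Your write-up is in fact a bit more complete than the paper's, since the inequality $|N^+[M]|\leq |M|+1$ in $D_0$, which justifies minimality, is only asserted implicitly there.
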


 \begin{proof}
 First notice that $\lfloor \frac{n-1}2\rfloor=\lceil \frac{n-2}2\rceil.$ Let $v$ be the central vertex of the star $K_{1,n-1}=(V,E),$ and take any orientation $D$ of $K_{1,n-1}.$ Let $X\subseteq V\setminus N^+_D[v],$ such that $|X|=\max \{0, \ \lceil \frac{n-2|N^+_D[v]|}2\rceil\}.$  If $d^+_D(v)>0,$  then the set $S=\{v\}\cup X$ is a MODS of $D,$ and $|S|\leq \lfloor \frac{n-1}2\rfloor.$ If $d^+_D(v)=0,$ then $X$ is a $\gamma^+_m(D)$-set of cardinality $\lfloor \frac{n-1}2\rfloor.$
\end{proof}

\begin{theorem}
 For every double star $G,$ $dom^+_m(G)=1.$ Moreover, if $n\geq5$ then $DOM^+_m(G)=2+\max\{0,\lceil\frac{n-8}2\rceil\}.$
 \end{theorem}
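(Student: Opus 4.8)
The plan is to fix notation by letting $u$ and $v$ be the two stems of the double star $G$, with $u$ carrying $a\ge 1$ leaves and $v$ carrying $b\ge 1$ leaves, so that $n=a+b+2$. First I would record the identity $\lceil\frac{n-8}2\rceil=\lceil\frac n2\rceil-4$, which rewrites the second assertion as
\[
DOM^+_m(G)=2+\max\Bigl\{0,\bigl\lceil\tfrac{n-8}2\bigr\rceil\Bigr\}=\max\Bigl\{2,\bigl\lceil\tfrac n2\bigr\rceil-2\Bigr\},
\]
the form I will actually verify. For $dom^+_m(G)=1$ I would assume without loss of generality that $a\ge b$ and orient every edge incident with $u$ away from $u$, including the stem edge $u\to v$. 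Then $N^+[u]=\{u,v\}\cup\{\text{leaves of }u\}$ has $a+2$ elements, and $a\ge\frac{a+b}2$ gives $a+2\ge\frac{n+2}2\ge\lceil\frac n2\rceil$; hence $\{u\}$ is a MODS, so $\gamma^+_m(D)=1$ (this is also immediate from Observation \ref{obs 5}). Since $\gamma^+_m$ is always at least $1$, this forces $dom^+_m(G)=1$.

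For the lower bound on $DOM^+_m(G)$ I would exhibit the orientation $D_0$ in which every leaf edge points toward its stem and $u\to v$. Here every out-neighbourhood is contained in $\{u,v\}$, so $N^+[S]\subseteq S\cup\{u,v\}$ for all $S$, whence $|N^+[S]|\le|S|+2$ and any MODS has size at least $\lceil\frac n2\rceil-2$. Moreover every single vertex satisfies $|N^+[w]|\le 2<\lceil\frac n2\rceil$ for $n\ge5$, so a MODS needs at least two vertices. Thus $\gamma^+_m(D_0)\ge\max\{2,\lceil\frac n2\rceil-2\}$, which gives the desired lower bound (a matching leaf-packing in fact yields equality).

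The upper bound $\gamma^+_m(D)\le\max\{2,\lceil\frac n2\rceil-2\}$ for \emph{every} orientation $D$ splits according to $\Delta^+(D)$. If $\Delta^+(D)\ge2$, Theorem \ref{thm 6}$(ii)$ yields either $\gamma^+_m(D)=1$ or $\gamma^+_m(D)\le\lceil\frac n2\rceil-2$, both at most $\max\{2,\lceil\frac n2\rceil-2\}$. If $\Delta^+(D)\le1$, then since $\sum_w d^+(w)=n-1$ exactly one vertex is a sink and all others have out-degree $1$, so $D$ is the tree oriented toward a unique root, and I build a MODS of the required size by hand: when the root is a stem I take one in-leaf of each stem (covering both stems) and pad with further in-leaves, while when the root is a leaf $\ell$ of, say, $u$, I include the stem $u$ (which covers both $u$ and the sink $\ell$ at once) together with a leaf of $v$, padding again with the remaining in-leaves. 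The main obstacle is exactly this $\Delta^+(D)\le1$ case: there Theorem \ref{thm 6}$(ii)$ only delivers the weaker bound $\lceil\frac n2\rceil-1$, and naive leaf-packing is too wasteful under an uneven split (e.g. $a=1$); the resolution is the observation that a stem pointing at a sink-leaf covers two fresh vertices simultaneously, restoring the efficiency needed to reach $\max\{2,\lceil\frac n2\rceil-2\}$. To finish I would check the feasibility of the padding step—that enough in-leaves remain, using $(a-1)+(b-1)=n-4\ge\lceil\frac n2\rceil-4$, and that the small-$n$ regime is covered by a two-vertex set—which completes the upper bound and hence the equality.
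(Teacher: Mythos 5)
Your proof is correct, and on the main point---the upper bound for $DOM^+_m(G)$ over all orientations---it takes a genuinely different route from the paper. The coincident parts: your orientation certifying $dom^+_m(G)=1$ and your extremal orientation $D_0$ (all leaf edges toward the stems, $u\to v$) are exactly the paper's choices, but where the paper merely asserts that this orientation attains $2+\max\{0,\lceil\frac{n-8}{2}\rceil\}$, you actually prove the lower bound via $N^+[S]\subseteq S\cup\{u,v\}$, so you supply a verification the paper omits. For the upper bound the paper argues uniformly: taking a leaf $x$ of $u$ and a leaf $y$ of $v$, in any orientation the tail of the edge $ux$ out-dominates both $u$ and $x$, and the tail of $vy$ out-dominates both $v$ and $y$; these two tails form a set $S$ with $|N^+[S]|\geq 4$, and padding with $\max\{0,\lceil\frac{n-8}{2}\rceil\}$ vertices outside $N^+[S]$ settles every orientation in one stroke, with no case distinction. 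You instead split on $\Delta^+(D)$: Theorem \ref{thm 6}(ii) when $\Delta^+(D)\geq 2$, and a hand analysis of the anti-arborescence case $\Delta^+(D)\leq 1$ (sink a stem versus sink a leaf). Both are sound; note that your hand-built two-vertex cores ($\{\ell_u,\ell_v\}$ when the sink is a stem, $\{u,\ell''\}$ when the sink is a leaf of $u$) are precisely instances of the paper's tail construction, which is what allows the paper to skip your hardest case entirely. What your route buys in exchange is the explicit lower-bound argument and a precise diagnosis of where the generic bound of Theorem \ref{thm 6}(ii) fails to suffice (exactly the $\Delta^+(D)\leq 1$ orientations); what the paper's route buys is brevity and uniformity.
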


 \begin{proof}
 Let $G=(V,E)$ be a double star,  and let $u$ and $v$ be the stem vertices of $G,$ with $d(u)\leq d(v).$ If we take an orientation $D'$ of $G$ such that $d^-_{D'}(v)=0,$ then $\{v\}$ is a MODS of $D',$  so $dom^+_m(G)=1.$

 For $DOM^+_m(G),$ since there exist vertices $x\in N(u)\setminus \{v\}$ and $y\in N(v)\setminus \{u\},$ it follows that for any orientation $D$ of $G,$ either $u\in N^+_D(x)$ or $x\in N^+_D(u),$ and either $v\in N^+_D(y)$ or $y\in N^+_D(v).$ Therefore, there is always a set $S$ with $|S|=2$ and $|N^+[S]|\geq4.$ This means that for $n\leq8,$ $DOM^+_m(G)\leq2.$ Moreover, if $n>8$ then $S\cup X$ is a MODS of $D,$ where $X\subseteq V\setminus N^+[S]$ and $|X|=\max\{0,\lceil\frac{n-8}2\rceil\}.$ Then we have that for $n\geq5,$ $DOM^+_m(G)\leq 2+\max\{0,\lceil\frac{n-8}2\rceil\}.$

 On the other hand, if $D'$ is an orientation of $G$ such that $d^+(u)=1$ and $d^+(v)=0,$ then $\gamma^+_m(D')=2+\max\{0,\lceil\frac{n-8}2\rceil\}.$
  \end{proof}

\begin{obs}\label{obs 33}
For every graph $G=(V,E)$ with $n\leq4$ and such that $E\neq \emptyset,$ $DOM^{+}_m(G)=1.$
\end{obs}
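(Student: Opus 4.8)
The plan is to reduce the statement to Observation \ref{obs 5}, which characterizes digraphs with set majority out-domination number equal to $1$. Since a MODS must be nonempty and hence $\gamma^+_m(D)\geq1$ for every digraph $D$, we always have $DOM^+_m(G)\geq1$. Thus it suffices to prove that $\gamma^+_m(D)=1$ for \emph{every} orientation $D$ of $G$, which forces the maximum defining $DOM^+_m(G)$ to equal $1$.

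First I would dispose of the degenerate case: since loops are not allowed, a graph on $n=1$ vertex cannot carry an edge, so the hypothesis $E\neq\emptyset$ leaves only $n\in\{2,3,4\}$. For these values one computes $\lceil \frac n2\rceil-1$ to be $0,1,1$ respectively; in particular $\lceil \frac n2\rceil-1\leq1$ in every admissible case. This is the only arithmetic the argument needs.

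Next, fix an arbitrary orientation $D$ of $G$. Because $E\neq\emptyset$, the digraph $D$ contains at least one arc $uv$, and its tail $u$ then satisfies $d^+(u)\geq1\geq\lceil \frac n2\rceil-1$. By Observation \ref{obs 5} this yields $\gamma^+_m(D)=1$. Since the orientation was arbitrary, every orientation of $G$ has set majority out-domination number $1$, and therefore $DOM^+_m(G)=1$, as claimed.

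I do not anticipate a genuine obstacle here: the only point requiring care is matching the threshold $\lceil \frac n2\rceil-1$ against the out-degree $1$ guaranteed by the presence of an arc for each admissible order, together with the vacuous exclusion of $n=1$ by the edge hypothesis. Observation \ref{obs 5} does all the substantive work.
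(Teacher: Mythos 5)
Your proof is correct and matches the intended argument: the paper states this observation without any proof, and the natural justification is exactly what you give — any orientation of a graph with an edge has a vertex of out-degree at least $1\geq\lceil \frac n2\rceil-1$ when $2\leq n\leq4$, so Observation~\ref{obs 5} gives $\gamma^+_m(D)=1$ for every orientation $D$, hence $DOM^+_m(G)=1$. Your handling of the degenerate case $n=1$ (vacuous since loops are disallowed) and the lower bound $DOM^+_m(G)\geq1$ is also sound.
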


\begin{prop}
Take two positive integers $r,s$ with $r\leq s,$
then $DOM^{+}_m(K_{r,s})=1$ if, and only if, $r+s\leq4$ or

    (i) $r=2,\ s= 3$

    (ii) $r=2,\ s= 4$

     (iii) $r=s=3$
  \end{prop}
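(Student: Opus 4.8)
The plan is to translate everything into a statement about out-degrees via Observation \ref{obs 5}, and then into a single arithmetic inequality. Write $n=r+s$ and set $k=\lceil\frac n2\rceil-2$. By Observation \ref{obs 5}, an orientation $D$ of $K_{r,s}$ satisfies $\gamma^+_m(D)=1$ exactly when $D$ has a vertex of out-degree at least $\lceil\frac n2\rceil-1=k+1$. Hence $DOM^+_m(K_{r,s})=1$ precisely when \emph{every} orientation has such a vertex, whereas $DOM^+_m(K_{r,s})\geq 2$ precisely when \emph{some} orientation keeps all out-degrees at most $k$. So the whole question reduces to: for which $r\leq s$ does $K_{r,s}$ admit an orientation of maximum out-degree at most $k$?

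First I would dispose of the easy implication by counting. In any orientation the out-degrees sum to the number of arcs, which is $|E(K_{r,s})|=rs$. Thus if $rs>nk$, no orientation can keep all $n$ out-degrees $\leq k$, so every orientation has a vertex of out-degree $\geq k+1$, giving $DOM^+_m(K_{r,s})=1$. This is the pigeonhole half of the equivalence.

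For the converse I must show that $rs\leq nk$ forces an orientation with all out-degrees at most $k$; this is the technical heart. Orienting $K_{r,s}$ on parts $A=\{a_1,\dots,a_r\}$, $B=\{b_1,\dots,b_s\}$ amounts to choosing a $0$--$1$ matrix $X=(x_{ij})$ with $x_{ij}=1$ iff $a_i\to b_j$; then $d^+(a_i)$ is the $i$-th row sum and $d^+(b_j)=r-(\text{$j$-th column sum})$. The condition ``all out-degrees $\leq k$'' becomes ``all row sums $\leq k$ and all column sums $\geq r-k$''. Filling each column with exactly $\max\{0,r-k\}$ ones, cycled through the rows so that the ones are spread as evenly as possible, yields maximum row sum $\lceil\frac{s(r-k)}{r}\rceil$, and the hypothesis $rs\leq nk=(r+s)k$ is exactly what guarantees $s(r-k)\leq rk$, hence $\lceil\frac{s(r-k)}{r}\rceil\leq k$. (Equivalently one may invoke the degree-constrained orientation theorem: $K_{r,s}$ has an orientation of maximum out-degree $\leq k$ iff every subgraph $H$ satisfies $|E(H)|\leq k|V(H)|$, and for complete bipartite graphs $|E(H)|/|V(H)|$ is maximized by the whole graph, again giving $rs\leq (r+s)k$.) I expect the even-distribution step---verifying that the cyclic filling never pushes a row above $k$ under the ``distinct rows per column'' constraint---to be the one delicate point.

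It then remains to solve $rs>(r+s)\big(\lceil\frac{r+s}2\rceil-2\big)$ over positive integers $r\leq s$. In the even case $n=2m$, writing $r=m-t,\ s=m+t$ reduces the inequality to $t^2<m(4-m)$, whose solutions are exactly $n\leq 4$ together with $(r,s)\in\{(2,4),(3,3)\}$ at $n=6$; in the odd case $n=2m+1$, writing $r=m-t,\ s=m+1+t$ reduces it to $t^2+t<2-(m-1)^2$, whose solutions are exactly $n\leq 3$ together with $(r,s)=(2,3)$ at $n=5$. Collecting the cases yields precisely $r+s\leq 4$ or $(r,s)\in\{(2,3),(2,4),(3,3)\}$, as claimed. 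The ceiling and parity bookkeeping is routine once the substitution $s-r\in\{2t,2t+1\}$ is made.
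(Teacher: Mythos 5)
Your proof is correct, but it takes a genuinely different route from the paper's. The paper argues by cases: for the listed graphs it invokes Observation \ref{obs 33} (when $n\leq 4$) and merely asserts that $K_{2,3}$, $K_{2,4}$, $K_{3,3}$ are ``easy to check,'' while for all remaining pairs $(r,s)$ it exhibits explicit orientations with small maximum out-degree --- all arcs into the smaller side when $s>r+2$, and a cyclic construction giving each vertex of the $r$-side out-degree $2$ when $r\leq s\leq r+2$ (which forces $r\geq 3$, $r+s\geq 7$). You instead compress the whole proposition into the single inequality $rs>(r+s)\left(\left\lceil \frac{n}{2}\right\rceil -2\right)$: Observation \ref{obs 5} converts $DOM^{+}_m(K_{r,s})=1$ into ``every orientation has a vertex of out-degree $\geq\lceil\frac{n}{2}\rceil-1$,'' pigeonhole on the arc count $rs$ gives one implication, a balanced cyclic filling of the bipartite adjacency matrix (or the degree-constrained orientation theorem) gives the other, and solving the inequality by parity reproduces exactly the listed cases --- I checked the algebra in both parities and the solution sets are as you state. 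What your approach buys: the pigeonhole step actually supplies the verification the paper omits for the three sporadic graphs (e.g.\ $K_{3,3}$ has $9$ arcs on $6$ vertices, so some vertex has out-degree $\geq 2$), the two ad hoc constructions are unified into one principle, and the reusable criterion ``$K_{r,s}$ has an orientation with maximum out-degree $\leq k$ iff $rs\leq (r+s)k$'' could serve the paper's Conjecture \ref{prop 2.1}. What the paper's approach buys: it is more elementary and self-contained, with orientations one can verify by hand. Your flagged ``delicate point'' is indeed fine: each column occupies $r-k\leq r$ cyclically consecutive rows, so its ones lie in distinct rows, and over all columns the rows used are $1,\dots,s(r-k)$ reduced modulo $r$, giving row sums within $\left\lceil \frac{s(r-k)}{r}\right\rceil\leq k$.
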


  \begin{proof}
  Consider a complete bipartite graph $K_{r,s}.$ If  $r+s\leq4,$ from Observation \ref{obs 33} it follows that $DOM^{+}_m(K_{r,s})=1.$ Likewise, it is easy to check that in any orientation $D$ of $K_{2,3}, \ K_{2,4},$ and $K_{3,3},$ there is a vertex $v$ such that $d^+(v)\geq2,$ so $\{v\}$ is a MODS of $D.$

  Now take $K_{r,s}$ which is not one of the cases mentioned above, and let $R$ and $S$ be the defining partite sets of $V,$ with $|R|=r$ and $|S|=s.$ If $s>r+2,$ the orientation $D$ of $K_{r,s}$ such that $d^+(v)=0$ for every $v\in R$ satisfies $\gamma^+_m(D)>1.$ Now suppose $r\leq s\leq r+2.$ This implies $r>2,$ since otherwise the graph would be one of those mentioned earlier. Moreover, since $K_{3,3}$ is as well one of the cases already considered, we have that $r+s\geq7.$ Let $R=\{u_1,...,u_r\}$ and $S=\{v_1,...,v_s\},$ and consider the orientation $D=(V,A)$ of $K_{r,s}$ such that the only arcs whose tail is in $u_i$ are $\{u_iv_j:j=2i, \ j=2i-1\},$ where  the product is taken modulo $s.$ Then for every $u\in R,$ $d^+(u)=2.$ If $r\neq s,$  for every $v\in S,$ $d^+(v)\leq r-1$; since $r+s\geq 2r+1,$ this implies $\gamma^+_m(D)>1.$ If $r=s,$ then for every $v\in S,$ $d^+(v)= r-2$; since $r+s\geq 2r,$ we have as well that $\gamma^+_m(D)>1.$ Therefore,  $DOM^{+}_m(K_{r,s})>1$ except for the cases mentioned above.
  \end{proof}

  In general, it seems difficult to find $DOM^{+}_m(K_{r,s}).$ However, we have the following conjecture:

\begin{con}  \label{prop 2.1}
Let $K_{r,s}$ be a complete bipartite graph with $r\leq s,$ and such that $DOM^{+}_m(K_{r,s})\neq1.$ Then:

 \[DOM^{+}_m(K_{r,s}) = \left\lbrace
  \begin{array}{c l}
  2  & \text{if  $  s\leq r+2,$ }\\
    \lceil\frac{s-r}2\rceil  & \text{  otherwise.}
  \end{array}
\right. \]
\end{con}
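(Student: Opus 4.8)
The plan is to prove matching lower and upper bounds in each of the two regimes. Write $n=r+s$ and $k=\lceil\frac{s-r}{2}\rceil$, and record the arithmetic identity $\lceil\frac{n}{2}\rceil=k+r$ (both sides have the same parity), so that a MODS of size $k$ is exactly one that out-dominates at least $k+r$ vertices. For an orientation $D$ I will abbreviate $A=\sum_{v\in S}d^+(v)$ and $B=\sum_{u\in R}d^+(u)$, the numbers of arcs directed $S\to R$ and $R\to S$, so $A+B=rs$. For the lower bounds I would exhibit explicit orientations. When $s\ge r+3$, orient every edge from $S$ to $R$; then each $u\in R$ has $d^+(u)=0$, each $v\in S$ out-dominates exactly $\{v\}\cup R$, and no vertex of $S$ out-dominates another, so one vertex of $S$ covers only $1+r<k+r$ and the cheapest MODS is $k$ vertices of $S$, giving $\gamma^+_m(D)=k$ and hence $DOM^+_m(K_{r,s})\ge\lceil\frac{s-r}{2}\rceil$. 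When $s\le r+2$, the orientations already used in the preceding proposition give $\gamma^+_m>1$, and together with the standing hypothesis $DOM^+_m(K_{r,s})\neq1$ this yields $DOM^+_m(K_{r,s})\ge2$.

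The upper bound in the regime $s\le r+2$ admits a clean pairing argument. Let $w_1\in S$ and $w_2\in R$ have maximum out-degree on their respective sides. Being on opposite sides they are adjacent, and a direct check shows $N^+[w_1]\cap N^+[w_2]$ consists of a single vertex (the head of the one arc between $w_1$ and $w_2$), so $|N^+[\{w_1,w_2\}]|=1+d^+(w_1)+d^+(w_2)$. Averaging gives $d^+(w_1)\ge A/s$ and $d^+(w_2)\ge B/r$, so $|N^+[\{w_1,w_2\}]|\ge 1+\frac{A}{s}+\frac{B}{r}\ge 1+r$, since under $A+B=rs$ and $r\le s$ the quantity $\frac{A}{s}+\frac{B}{r}$ is minimized at $A=rs$ with value $r$. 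As $s\le r+2$ forces $\lceil\frac{n}{2}\rceil\le r+1$, the pair $\{w_1,w_2\}$ is a MODS, whence $\gamma^+_m(D)\le2$ for every orientation and $DOM^+_m(K_{r,s})=2$.

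The genuinely difficult case is the upper bound for $s\ge r+3$, where I must show that every orientation has a MODS of size at most $k$, i.e.\ that some $k$ vertices out-dominate at least $k+r$ vertices. First I would dispose of the easy wins: any vertex of out-degree at least $\lceil\frac{n}{2}\rceil-1$ is by itself a MODS, and in particular so is any $u\in R$ with $u\to v$ for all $v\in S$; hence I may assume every $u\in R$ has an in-neighbour in $S$. The main tool is \emph{Strategy A}: take a minimum set of vertices of $S$ whose out-neighbourhoods cover $R$ and pad it with further vertices of $S$ up to size $k$, producing coverage exactly $k+r$ whenever the covering number $\alpha_{S\to R}$ of $R$ by $S$ is at most $k$. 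Because $\alpha_{S\to R}\le r$ under the standing assumption, Strategy A settles every instance with $s\ge 3r$ (there $k\ge r\ge\alpha_{S\to R}$); the crux is the band $r+3\le s<3r$, in which $k$ may be smaller than $\alpha_{S\to R}$.

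The main obstacle is therefore to control $\alpha_{S\to R}$, equivalently the overlaps among the out-neighbourhoods $\{N^+(v):v\in S\}$, in adversarial orientations. The approach I would pursue is a duality played against \emph{Strategy B}, which selects $k$ vertices of $R$ and, using $B$ arcs directed $R\to S$, aims to out-dominate at least $r$ vertices of $S$ (adjoining these to the $k$ chosen vertices again gives coverage $\ge k+r$). The heuristic is that failure of Strategy A means the sets $N^+(v)$ are small and heavily overlapping, forcing $A$ small and thus $B=rs-A$ large, so that the $R$-vertices collectively reach many vertices of $S$ and Strategy B succeeds. Making this trade-off quantitative is the heart of the matter: I expect to derive it either from an LP-duality bound (fractional cover of $R$ by $S$ against a fractional packing) or from a weighted double count of the arcs that balances the two orientations, and this is the step I anticipate will demand the most care, since small families in the band (such as $K_{3,6}$) already show the two strategies trading off against one another only narrowly.
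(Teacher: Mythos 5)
First, a point of order: the statement you set out to prove is labelled a \emph{Conjecture} in the paper. The authors explicitly remark that determining $DOM^{+}_m(K_{r,s})$ in general ``seems difficult'' and offer no proof, so there is no reference argument to compare yours against; your attempt has to stand entirely on its own. Parts of it do stand. The identity $\lceil \frac{n}{2}\rceil = k+r$ is correct; the all-arcs-from-$S$-to-$R$ orientation does have $\gamma^+_m = \lceil\frac{s-r}{2}\rceil$, which gives the lower bound in the second regime; and your treatment of $s\le r+2$ is a complete and rather elegant proof: the intersection $N^{+}[w_1]\cap N^{+}[w_2]$ really is a single vertex (orientations contain no opposite arc pairs, by the paper's definition with $|E|=|A|$), and the convexity bound $\frac{A}{s}+\frac{B}{r}\ge r$ under $A+B=rs$, $r\le s$ is right, so every orientation admits a $2$-element MODS there. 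Strategy A likewise disposes of $s\ge 3r$ correctly. All of this goes beyond what the paper establishes.

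The genuine gap is the one you flag yourself: in the band $r+3\le s<3r$ you prove nothing, only name two strategies and assert a trade-off you ``expect to derive'' from LP duality or a double count. Worse, the heuristic underlying that trade-off is unsound, so this is not a routine step left to the reader. Failure of Strategy A means the covering number of $R$ by $\{N^{+}(v): v\in S\}$ exceeds $k$, and this does \emph{not} force $A$ to be small: fix $U\subseteq R$ with $|U|=k+1$ and give each $v\in S$ the out-neighborhood $R\setminus M_v$, where the sets $M_v$ run, in a balanced way, over the $k$-element subsets of $U$. Any $k$ of the $M_v$ share an element, so no $k$ vertices of $S$ cover $R$ (covering number $k+1$), yet $A=s(r-k)$ is large and $B=sk$ is a small fraction of $rs$ whenever $k$ is small compared to $r$ --- exactly the opposite of ``$A$ small, $B$ large.'' What rescues the conjecture in this configuration is neither Strategy A nor Strategy B as you describe them, but concentration of the in-arcs on $U$: some $u\in U$ has $d^{+}(u)\ge \frac{sk}{k+1}\ge 2k-1$ (using $s\ge 2k+1$, which holds in the band since $r\ge2$ there), so the \emph{mixed} pair $\{v,u\}$ already dominates $1+(r-k)+(2k-1)=k+r=\lceil\frac{n}{2}\rceil$ vertices. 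So the actual proof must play singletons, mixed pairs, $S$-covers, and larger mixed sets against one another simultaneously, which is precisely the step you postponed. Until that band is closed, the statement remains exactly what the paper says it is: a conjecture, now with its two extreme regimes ($s\le r+2$ and $s\ge 3r$) verified by your argument.
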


\begin{theorem}
 For $n\geq 4,$  $dom^+_m(W_n)=1$ and $DOM^+_m(W_n)=\lceil  \frac {n-2}4 \rceil.$
\end{theorem}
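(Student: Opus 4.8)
The plan is to treat the two invariants separately. Write $c$ for the hub of $W_n$ and let $R=\{v_1,\dots,v_{n-1}\}$ be the rim, which induces a cycle $C_{n-1}$; recall $W_n$ has order $n$, so each spoke joins $c$ to a rim vertex. For $dom^+_m(W_n)=1$ I would orient every spoke away from $c$, so that $d^+(c)=n-1\ge\lceil\frac n2\rceil-1$. By Observation \ref{obs 5} the singleton $\{c\}$ is then a MODS, so this orientation has $\gamma^+_m=1$; since $|N^+[\emptyset]|=0<\lceil\frac n2\rceil$ when $n\ge4$, no MODS is empty, and therefore $dom^+_m(W_n)=1$.

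For the lower bound $DOM^+_m(W_n)\ge\lceil\frac{n-2}4\rceil$ I would exhibit the orientation $D^*$ whose rim is the directed cycle $v_1\to v_2\to\cdots\to v_{n-1}\to v_1$ and all of whose spokes point into $c$. Here $d^+(c)=0$ and $N^+[v_i]=\{v_i,v_{i+1},c\}$ for each $i$. An optimal MODS may be taken inside $R$ (adjoining $c$ never helps, as $c$ has out-degree $0$ and is already reached by any rim vertex), and for $S\subseteq R$ one has $|N^+[S]|=1+|N^+[S]\cap R|$, with $|N^+[S]\cap R|\le 2|S|$ because out-domination in a directed cycle covers at most two vertices per chosen vertex. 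Solving $2|S|+1\ge\lceil\frac n2\rceil$ gives $\gamma^+_m(D^*)=\lceil\tfrac{\lceil n/2\rceil-1}2\rceil=\lceil\frac{n-2}4\rceil$ after a short ceiling computation.

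For the upper bound $DOM^+_m(W_n)\le\lceil\frac{n-2}4\rceil$ I would fix an arbitrary orientation $D$, put $k=\lceil\frac{n-2}4\rceil$, and split into two cases. If $d^+(c)\ge\lceil\frac n2\rceil-1$, then $\{c\}$ is a MODS by Observation \ref{obs 5} and $1\le k$ settles it. Otherwise $d^-(c)=(n-1)-d^+(c)\ge\lfloor\frac n2\rfloor+1$, so the set $W\subseteq R$ of rim vertices carrying an arc into $c$ contains more than half the rim. A count of the sinks and sources of the oriented rim (their numbers are equal, and twice $|W|$ would exceed $n-1$) shows $W$ cannot consist entirely of rim-sinks, so some $w\in W$ is the tail of a rim arc $w\to w'$. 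I would start a rim matching with the edge $ww'$ and extend it greedily inside the path $C_{n-1}-\{w,w'\}$ (which has $n-3$ vertices, hence a matching of size $\lfloor\frac{n-3}2\rfloor\ge k-1$) to obtain a matching of exactly $k$ rim edges one of whose tails is $w$. Taking $S$ to be the $k$ tails, the set $N^+[S]$ contains the $2k$ pairwise-distinct endpoints of these edges together with $c$, so $|N^+[S]|\ge 2k+1\ge\lceil\frac n2\rceil$, the last step being the arithmetic identity $2\lceil\frac{n-2}4\rceil\ge\lceil\frac n2\rceil-1$. Combining the two bounds yields $DOM^+_m(W_n)=\lceil\frac{n-2}4\rceil$.

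The main obstacle is the upper bound in the second case, where $k$ sits exactly at the tight threshold: one must cover the hub \emph{and} enough rim vertices using only $k$ vertices. Two points need care. First, guaranteeing a matching edge whose tail lies in $W$ (so that $c$ is covered for free) — this is exactly what the sink/source count delivers, together with the room estimate $k-1\le\lfloor\frac{n-3}2\rfloor$ for extending the matching. Second, the ceiling inequality $2\lceil\frac{n-2}4\rceil\ge\lceil\frac n2\rceil-1$, which holds with equality precisely when $n\equiv1,2\pmod 4$; this equality is what forces the bound to be sharp and must be checked against the value obtained from $D^*$.
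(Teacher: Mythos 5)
Your proof is correct, and its skeleton is the same as the paper's: the same orientation for $dom^+_m(W_n)=1$ (all spokes leaving the hub), the same extremal orientation for the lower bound on $DOM^+_m$ (rim a directed cycle, all spokes entering the hub; you also supply the coverage count $|N^+[S]|\le 2|S|+1$ that the paper leaves implicit), and the same two-case scheme for the upper bound: either the hub alone is a MODS, or one finds a rim vertex $w$ dominating both the hub and a rim neighbour and then adds vertices each covering a disjoint pair of rim vertices, giving $2k+1\ge\lceil\frac n2\rceil$ covered vertices from $k=\lceil\frac{n-2}4\rceil$ chosen ones. Your packaging of that second step as a matching of $k$ rim edges whose tails are selected is interchangeable with the paper's device of picking one dominating vertex from each block of four consecutive rim vertices. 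Where you genuinely improve on the paper is the existence of $w$: the paper asserts that $d^-(v)\ge\frac{n-1}2$ forces a vertex $u_1$ with $u_1v\in A$ and $u_1u_2\in A$, but as stated this can fail. For odd $n$ the rim $C_{n-1}$ has even length, so an alternating orientation of the rim makes all $\frac{n-1}2$ in-neighbours of the hub rim-sinks (already for $W_5$); that situation is rescued only because the hub then has out-degree $\lceil\frac n2\rceil-1$ and is itself a MODS, a point the paper never makes. Your case split at $d^+(c)\ge\lceil\frac n2\rceil-1$, which in the complementary case gives $|W|\ge\lfloor\frac n2\rfloor+1$, strictly more than half the rim, combined with the sink/source count on the oriented cycle, handles exactly this edge case and makes the step rigorous. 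The two arithmetic facts you flag ($\lfloor\frac{n-3}2\rfloor\ge k-1$ and $2\lceil\frac{n-2}4\rceil\ge\lceil\frac n2\rceil-1$) both check out, so your argument is complete.
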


\begin{proof}
Let $W_n=v+C_{n-1},$ and let $D$ be an orientation of $W_n$ such that $d^-(v)=0.$  Then $\{v\}$ is a an out-dominating set of $D$, which in particular is a MODS of $D,$ so  $dom^+_m(W_n)= 1.$

On the other hand, consider $W_n=v+C_{n-1}$ and let $D=(V,A)$ be any orientation of $W_n.$ Observe that for any two consecutive vertices $x,y$ of $C_{n-1},$  one of the arcs $xy$ and $yx$ is in $A.$  If $d^-_D(v)<\frac{n-1}2,$ then $\{v\}$ is a MODS of $D,$ as mentioned earlier. Suppose $d^-_D(v)\geq\frac{n-1}2,$ and number the vertices of $C_{n-1}$ following the order of the cycle, that is, $V(C_{n-1})=\{u_1,...,u_{n-1}\},$ in such a way that $u_1v\in A$ and $u_1u_2\in A$ (notice that such a vertex $u_1$ will always exist, since $d^-_D(v)\geq\frac{n-1}2$). Now from the set $S_1=\{u_6,u_7,u_8,u_9\}$ take one vertex which out-dominates other vertex of $S_1,$ and call it $z_1$; from the set $S_2=\{u_{10},u_{11},u_{12},u_{13}\}$ take one vertex which out-dominates other vertex of $S_2,$ and call it $z_2,$ and so on. The last set $S_k=\{...,u_{n-1}\}$ may have less than four vertices, but we take anyway a vertex $z_k$ which out-dominates another vertex of the set, unless $S_k=\{u_{n-1}\},$ in which case we take $z_k=u_{n-1}.$ Then $S=\{u_1\}\cup\{z_1,...,z_k\}$ is a MODS of $D$ of cardinality $\lceil  \frac {n-2}4 \rceil.$ If $D$ is the orientation of $W_n$ such that $d^+(v)=0$ and    $d^-(x)=1$ for every $x\in V\setminus \{v\}$ (i.e., $D-v$  is a directed cycle on $n-1$ vertices), then $S$ is a $\gamma^+_m(D)$-set.
\end{proof}

Finally, we note that an "Intermediate Value Theorem" for orientable majority out-domination holds:

\begin{theorem}
For every graph $G$ and every integer $c$ with $dom^{+}_m(G)\leq c\leq  DOM^+_m(G),$ there exists an orientation $D$ of $G$ such that $\gamma^+_m(D)=c.$
 \end{theorem}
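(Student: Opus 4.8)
The plan is to prove this ``Intermediate Value Theorem'' by a discrete continuity argument on the space of orientations of $G$. The key structural fact is that any two orientations of $G$ can be connected by a sequence of single-arc reversals: if $D$ and $D'$ are both orientations of $G$, then since they agree on the underlying edge set, they differ only in the direction assigned to some subset of the edges, and we can reverse these disagreeing arcs one at a time to pass from $D$ to $D'$. At each intermediate step we still have a legitimate orientation of $G$, because reversing a single arc of an orientation of $G$ yields another orientation of $G$.

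First I would fix orientations $D_{\min}$ and $D_{\max}$ of $G$ realizing the extremes, that is, with $\gamma^+_m(D_{\min})=dom^+_m(G)$ and $\gamma^+_m(D_{\max})=DOM^+_m(G)$; these exist by the definitions of the lower and upper orientable set majority domination numbers. Next I would build a finite sequence of orientations $D_{\min}=D_0, D_1, \dots, D_t=D_{\max}$ in which each $D_{i+1}$ is obtained from $D_i$ by reversing exactly one arc. The crucial ingredient is the preceding proposition, which states that reversing a single arc changes $\gamma^+_m$ by at most $1$, i.e. $|\gamma^+_m(D_i)-\gamma^+_m(D_{i+1})|\leq 1$ for every $i$. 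Consequently the integer sequence $\gamma^+_m(D_0),\gamma^+_m(D_1),\dots,\gamma^+_m(D_t)$ starts at $dom^+_m(G)$, ends at $DOM^+_m(G)$, and never jumps by more than one unit at a time.

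The conclusion then follows from the discrete intermediate value principle: any integer $c$ with $dom^+_m(G)\leq c\leq DOM^+_m(G)$ must be attained as one of the values $\gamma^+_m(D_i)$, for a sequence of integers whose consecutive terms differ by at most $1$ cannot skip over any integer between its first and last values. The orientation $D=D_i$ witnessing $\gamma^+_m(D_i)=c$ is the desired one. I expect the only delicate point to be the verification that the sequence of single-arc reversals connecting $D_{\min}$ to $D_{\max}$ stays within the orientations of $G$ at every step, but this is immediate since the underlying undirected edge set is preserved by each reversal. The heavy lifting is entirely absorbed into the single-arc-reversal proposition proved just above, so the argument here is essentially a packaging of that Lipschitz-type bound into the standard ``step-by-step'' intermediate value reasoning.
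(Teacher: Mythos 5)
Your proposal is correct and is exactly the argument the paper invokes: the paper's own ``proof'' simply states that it is identical to the corresponding result for out-domination in Chartrand, VanderJagt, and Yue, and that classical argument is precisely your combination of the single-arc-reversal bound $|\gamma^+_m(D)-\gamma^+_m(D')|\leq 1$ (the proposition proved just before) with the discrete intermediate value principle along a sequence of arc reversals connecting two extremal orientations. Your write-up is in fact more self-contained than the paper's, since it makes explicit the chain of orientations and the role of the Lipschitz-type proposition rather than deferring to the reference.
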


 \begin{proof}
 The proof is identical to the proof of the corresponding  result for out-domination given in  \cite{GDB}.
 \end{proof}

\section{Conclusions and scope}
In this paper we extended the notion of majority dominating set to digraphs. In addition to its applications, the topic is of mathematical interest since the behavior of MODSs is somewhat different to that of their counterparts in graphs. This is only an introductory work, in which the concept is defined and some basic results are proven. We hope this paper will be helpful for people working in related topics, and perhaps it will encourage further research in the field.

It would be interesting to prove the NP-completness of the decision problem Majority Dominating Set (MODS): Given a graph $G$ and a positive integer $k$, does $G$ have a MODS of cardinality $k$ or less?

\end{document}